\newtheorem{theorem}{Theorem}[section]
\newtheorem{lemma}[theorem]{Lemma}
\newcommand{\be}{\begin{equation}}
\newcommand{\ee}{\end{equation}}
\newcommand{\lt}{\left}
\newcommand{\rt}{\right}
\newcommand{\R}{\mathbb{R}}
\newcommand{\s}{\mathbb{S}}
\newcommand{\al}{\alpha}
\newcommand{\mL}{\mathcal{L}}
\newcommand{\bn}{\bar{\nabla}}
\newcommand{\goto}{\rightarrow}
\newcommand{\mM}{\mathcal{M}}
\theoremstyle{definition}
\newtheorem{defin}[theorem]{Definition}
\numberwithin{equation}{section}
\begin{document}
\setlength{\baselineskip}{1.2\baselineskip}

\title[Asymptotic convergence for modified scalar curvature flow]
{Asymptotic convergence for modified scalar curvature flow}

\author{Ling Xiao}
\address{Department of Mathematics, University of Connecticut,
Storrs, CT 06268}
\email{ling.2.xiao@uconn.edu}
\thanks{\emph{2010 Mathematics Subject Classification. Primary 53C44; Secondary 35K20, 58J35.}}

\begin{abstract}
In this paper, we study the flow of closed, starshaped hypersurfaces in $\mathbb{R}^{n+1}$
with speed $r^\al\sigma_2^{1/2},$ where $\sigma_2^{1/2}$ is the normalized square root of the
scalar curvature, $\al\geq 2,$ and $r$ is the distance from points on the hypersurface to the origin.
We prove that the flow exists for all time and the starshapedness is preserved.
Moreover, after normalization, we show that the flow converges exponentially fast to a sphere centered at origin.
When $\al<2,$ a counterexample is given for the above convergence.
\end{abstract}

\maketitle

\section{Introduction}
\label{sec0}

In this paper, we will consider the evolution of a compact, starshaped hypersurface $\Sigma_0\subset\R^{n+1}$ by modified
 scalar curvature. Namely, we will study the following geometric flow:
\be\label{0.1}
\lt\{
\begin{aligned}
&\frac{\partial X}{\partial t}(x, t)=-r^{\al}\sigma_2^{1/2}(x, t)\nu\\
&X(x, 0)=X_0(x),\\
\end{aligned}
\right.
\ee
where $\al\geq 2, $ $\sigma_2=\binom{n}{2}^{-1}S_2=\binom{n}{2}^{-1}\sum\limits_{1\leq i_1<i_2\leq n}\kappa_{i_1}\kappa_{i_2}$
is the scalar curvature of the hypersurface $\Sigma_t, $ parametrized by $X(\cdot, t):\s^n\goto\R^{n+1},$ $r=|X(x, t)|,$ and $\nu(\cdot, t)$ is the unit outer normal
at $X(\cdot, t).$ Following \cite{CNS3}, we make the following definition:
\begin{defin}
\label{def0.1}
A hypersurface $\Sigma$ is called {\em 2-convex}, if for any $p\in\Sigma,$ the principal curvatures of $\Sigma$ at $p$ satisfy
\[\kappa[\Sigma(p)]=(\kappa_1,\cdots, \kappa_n)\in\Gamma_2,\]
where $\Gamma_2$ is the G\r{a}rding cone:
\[\Gamma_2=\{\lambda\in\R^n|\sigma_1(\lambda)>0\,\, \mbox{and $\sigma_2(\lambda)>0$}\}.\]
\end{defin}

Flows of hypersurfaces in Euclidean space $\R^{n+1}$ by functions of principal curvatures have been extensively studied in the past
four decades. The flow generated by Gauss curvature was first introduced by Firey \cite{Fir74} as a model for the changing shape of a tumbling stone,
which is subject to collisions from all directions with uniform frequency.
Since then, the Gauss curvature flow has been studied by many authors (see \cite{And99}, \cite{And00}, \cite{BCD17},
\cite{Cho85}, \cite{GN17}, \cite{DL04}, etc.). In particular, Andrews \cite{And99} proved that the Gauss curvature flow deforms a uniformly convex hypersuface into a round point when $n=2.$ In higher dimensions, the corresponding result is obtained by combining the results in \cite{GN17} and \cite{BCD17}. Such properties are the generalizations of Huisken \cite{Hui84} for the mean curvature flow.

As a natural extension, the study of different types of fully nonlinear geometric flows, especially their asymptotic behaviors, have attracted lots of attentions through the years (see \cite{And94}, \cite{BH17}, \cite{CZ99}, \cite{Gag93}, \cite{Ger90}, \cite{Sch06}, \cite{Urb90}, \cite{Urb91}, etc.). In a recent paper \cite{LSW17},  Li, Sheng, and Wang studied a contracting flow with speed $fr^\al K,$ where $K$ is the Gauss curvature and $f$ is a positive function. They provided a parabolic proof for the classical Aleksandrov problem; they also resolved the dual $q$- Minkowski problem for the case $q<0.$ Moreover,
in their follow-up paper \cite{LSW18}, they studied the evolution of closed, convex hypersufaces with speed
$r^\alpha\sigma_k,$ where $\sigma_k$ is the $k$-th elementary symmetric polynomial of principal curvatures.

The flow problem \eqref{0.1} we study here is inspired by \cite{LSW17, LSW18}. Generally speaking, it is more difficult to study fully nonlinear curvature flows and their asymptotic behaviors for the following reasons. First, there is a lack of monotonicity quantities; usually in order to study the asymptotic behavior of curvature flows it is nice to have some monotonicity formulas (see \cite{Hui90} and \cite{GN17} for example). Second, there is a lack of convexity estimates in the limit; unlike in \cite{HS99, BH17, ALM15}, we do not know if the limit is convex. Lastly, the admissible cone for general curvature flows can be very large, which makes it hard to obtain curvature estimates.

Our motivations for considering geometric flows of this type are the following: (i) It is the first step in solving the Christoffel-Minkowski problem for
curvature measures using a flow approach. (ii) We introduce new techniques to obtain the curvature estimates, and we expect these new techniques will be useful for other geometry problems.

Let us state our main result as follows.
\begin{theorem}
\label{th0.1} Let $\mathcal{M}_0$ be a smooth, closed, 2-convex, starshaped hypersurface in $\R^{n+1}.$ Then the
flow \eqref{0.1} has a unique smooth starshaped solution $\mathcal{M}_t$ with positive scalar curvature, for all time $t>0,$ which converges to the origin. After a proper rescaling,
$X\goto\phi^{-1}(t)X,$ the hypersurface $\tilde{\mM}_t=\phi^{-1}(t)\mM_t$ converges exponentially fast to a sphere centered at the origin in the $C^\infty$ topology.
\end{theorem}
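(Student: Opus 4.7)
The plan is to reduce \eqref{0.1} to a scalar fully nonlinear parabolic equation for the radial function by writing $\mM_t$ as the graph $X(x,t)=\rho(x,t)x$ over $\mathbb{S}^n$; the flow becomes
\[
\rho_t=-v\rho^\alpha\sigma_2^{1/2},\qquad v=\sqrt{1+\abs{\nabla\rho}^2/\rho^2},
\]
with $\sigma_2$ expressed in $\rho$ and its first and second covariant derivatives on $\mathbb{S}^n$. A concentric sphere of radius $r$ evolves under \eqref{0.1} by $\dot r=-r^{\alpha-1}$, so choosing $\phi(t)$ to solve this ODE with $\phi(0)=1$ and setting $\tilde\rho=\phi^{-1}\rho$ gives, after a suitable time reparametrization, a normalized flow whose stationary solutions are the unit spheres centered at the origin. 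All a priori estimates below are carried out on this normalized flow.

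Next I would run the standard hierarchy of estimates. The $C^0$ bound $c\le\tilde\rho\le C$ follows from comparison with inner and outer spherical barriers together with the ODE for $\phi$ and the starshapedness of $\mM_0$. A $C^1$ bound, equivalent to a uniform positive lower bound on $\langle X,\nu\rangle/\abs{X}$, comes from the maximum principle applied to a test function of the form $\abs{\nabla\log\rho}^2+\psi(\rho)$ with $\psi$ chosen so that the bad terms are absorbed; this is standard for starshaped flows and uses only the $C^0$ bound.

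The main obstacle is the curvature estimate: a uniform two-sided bound on $\sigma_2$ together with a uniform upper bound on the largest principal curvature $\kappa_{\max}$, so that the linearized operator is uniformly parabolic in the interior of $\Gamma_2$. This is the difficulty the author highlights in the introduction, since $\Gamma_2$ is a large cone, no convexity is preserved along the flow, and no monotonicity formula is available. My strategy is to derive the evolution equations for $\sigma_2$ and $\kappa_{\max}$ under the normalized flow and apply the maximum principle to test functions of the type $\log\sigma_2+\lambda\log\rho$ and $\log\kappa_{\max}+\mu\abs{X}^2$. The terms generated by the weight $r^\alpha$ pick up favorable signs precisely when $\alpha\ge 2$, which is the mechanism behind the threshold in the theorem and is consistent with the counterexample for $\alpha<2$ noted in the abstract. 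Carrying this step through rigorously is where I expect the paper's new techniques to be deployed; once it is complete, uniform parabolicity holds and Krylov--Evans together with Schauder estimates yields uniform $C^{k,\beta}$ bounds for all $k$.

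With uniform higher-order bounds, long-time existence and the $C^\infty$ smoothness of $\mM_t$ and $\tilde{\mM}_t$ are immediate, and $\mM_t$ converges to the origin because the $C^0$ bound on $\tilde\rho$ combined with $\phi(t)\to 0$ forces $\sup_x\rho(x,t)\to 0$. For the exponential convergence, I would linearize the normalized flow at the unit sphere: a direct computation shows the linearized operator is a positive multiple of $\Delta_{\mathbb{S}^n}$ shifted by a constant depending on $\alpha$, with spectrum strictly negative on the orthogonal complement of the scaling mode for every $\alpha\ge 2$. The scaling mode is strictly damped when $\alpha>2$ and is pinned by the choice of $\phi$ when $\alpha=2$. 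A standard interpolation/decay argument, as used for instance in \cite{Ger90} and \cite{Urb90}, then upgrades the exponential decay predicted by the linearization to exponential decay of $\tilde\rho-1$ in every $C^k$ norm, which is the stated $C^\infty$ convergence.
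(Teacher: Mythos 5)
Your overall skeleton (radial graph over $\s^n$, normalization, $C^0$--$C^1$ bounds, curvature bound, Krylov--Evans plus Schauder, then convergence) matches the paper's, but the proposal has a genuine gap at exactly the point the theorem lives or dies: the curvature estimate. You state the plan of applying the maximum principle to $\log\sigma_2+\lambda\log\rho$ and $\log\kappa_{\max}+\mu\abs{X}^2$ and then concede that ``carrying this step through rigorously is where I expect the paper's new techniques to be deployed.'' That step is not a routine computation that can be deferred: since no convexity is preserved and the admissible cone $\Gamma_2$ is large, the third-order terms $F^{pq,rs}h_{pqk}h_{rsk}$ and the gradient term $2\al r^{\al-1}r_kF_k$ do not have a sign, and the weight $r^\al$ does not rescue them. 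The paper's actual argument uses the test function $\log H-\log(u-a)$ (with $u=\lt<X,\nu\rt>$, $H$ controlling $\abs{A}$ because $H^2-\abs{A}^2=2S_2>0$) and a two-step scheme: first show $\abs{\kappa_i}\leq C$ for $i\geq 2$ when $\kappa_1$ is large, using the structure $f^i=(H-\kappa_i)/(2\beta F)$ and a lemma of Guan--Qiu on $S_3$, with the Cauchy--Schwarz parameter chosen as $\lambda=\eta f^1$; then bound $\kappa_1$ by choosing $\lambda=\eta(f^1)^{2/3}$ and splitting into cases according to whether $\abs{H_1-h_{111}}$ is comparable to $\abs{H_1}$. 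Nothing in your sketch substitutes for this mechanism. You also misplace where $\al\geq 2$ enters: in the paper it is used in the $C^0$ estimate (monotonicity of $r_{\min}$) and in the gradient estimate (the term $(2-\al)\Phi/u\leq 0$), not as a sign condition in the curvature bound; correspondingly your claim that the $C^1$ bound ``is standard and uses only the $C^0$ bound'' glosses over a step that in the paper genuinely uses $\al\geq 2$ and the concavity and structure of $\sigma_2^{1/2}$.

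A second, smaller gap is the final convergence step. Linearizing at the unit sphere only gives exponential decay once the normalized hypersurface is already close to the sphere, and you give no argument that it ever gets close: with no monotonicity formula available, uniform estimates plus compactness alone do not force subsequential limits to be round. The paper avoids this by a global argument: a maximum-principle computation for $G=\tfrac12\abs{\bn\rho}^2$ shows $G_t\leq-\gamma G$, where the needed negativity of the coefficient matrix $A^{lm}=F^{ij}\gamma_{il}\gamma_{mj}$ comes from the curvature bound of Theorem \ref{th4.1}; exponential decay of $\abs{\bn r}/r$ then yields convergence to a round sphere, and interpolation upgrades it to $C^\infty$. If you want to keep the linearization route, you must first supply a separate argument (such as this gradient-decay estimate) showing the flow enters a small neighborhood of the sphere.
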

Following \cite{LSW17}, our choice of the rescaling factor $\phi(t)$ is motivated by the calculations below.
Assume
\be\label{0.2}
X(\cdot, t)=\phi(t)X_0(\cdot)
\ee
evolves under the flow \eqref{0.1} with initial data $\phi_0X_0,$ where $\phi$ is a positive function and $\phi_0=\phi(0).$ By differentiating equation
\eqref{0.2} with respect to $t$ and multiplying both sides by $\nu_0=\nu(\cdot, t),$ we get
\be\label{0.3}
\phi'(t)\lt< X_0, \nu_0\rt>=-\phi^{\al-1}\sigma_2^{1/2}(0)r_0^\al,
\ee
where $\sigma_2^{1/2}(0)$ is the normalized scalar curvature of $M_0=X(\mathbb{S}^n),$ and $r_0$ is the radial function of $M_0.$
By \eqref{0.3} we have
\[\phi'=-\lambda\phi^{\al-1}\]
for some constant $\lambda>0.$ We may assume $\lambda=1.$ Then
\be\label{0.4}
\begin{aligned}
\phi(t)&=\phi_0e^{-t},\,\,\mbox{if $\al=2,$}\\
\phi(t)&=\lt(\phi_0^{2-\al}-(2-\al)t\rt)^{\frac{1}{2-\al}},\,\,\mbox{if $\al>2$}.
\end{aligned}
\ee

The study of the asymptotic behavior of the flow \eqref{0.1} is equivalent to the study of the long time behavior of the rescaled flow \eqref{0.5}.
Let
\[\tilde{X}(\cdot, \tau)=\phi^{-1}(t)X(\cdot, t),\]
where $\tau=-\ln\phi.$ Then $\tilde{X}(\cdot, \tau)$ satisfies the following equation
\be\label{0.5}
\lt\{
\begin{aligned}
\frac{\partial X}{\partial t}(x, t)&=-\sigma_2^{1/2}r^\al\nu+X,\\
X(\cdot, 0)&=X_0.
\end{aligned}
\rt.
\ee
For convenience, we still use $t$ instead of $\tau$ to denote the time variable, and without causing confusions, we omit ``tilde".

Theorem \ref{th0.1} is optimal in the following sense. If $\al<2,$ we find that the hypersurface evolving by \eqref{0.1} can reach the origin in finite time; hence, the flow does not converge to a round sphere centered at the origin. More precisely, we prove
\begin{theorem}
\label{th0.2}
Suppose $\al<2,$ then there exists a smooth, closed, uniformly convex (automatically starshaped and 2-convex) $\mM_0,$
such that under the flow \eqref{0.1},
\[\mathcal{R}(X(\cdot, t)):=\frac{\max_{\s^n}r(\cdot, t)}{\min_{\s^n}r(\cdot, t)}\goto\infty,\,\,\mbox{as $t\goto T$},\]
for some finite $T>0.$
\end{theorem}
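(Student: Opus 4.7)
The plan is to construct an explicit smooth, closed, uniformly convex, starshaped $\mathcal{M}_0$ for which the flow \eqref{0.1} drives $\min_{\s^n} r(\cdot,t)$ to zero at a finite time $T$ while $\max_{\s^n} r(\cdot,t)$ stays positive, producing $\mathcal{R}(X(\cdot,t))\to\infty$. The decisive feature of the regime $\alpha<2$ is that a sphere of radius $R$ centered at the origin satisfies $\dot R=-R^{\alpha-1}$, so $R(t)=(R^{2-\alpha}-(2-\alpha)t)^{1/(2-\alpha)}$ collapses to the origin at the finite time $T_R=R^{2-\alpha}/(2-\alpha)$. By the parabolic comparison principle, concentric origin-centered spheres act as two-sided radial barriers for $\mathcal{M}_t$: the inscribed sphere extinguishes first and the circumscribed one later. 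The goal is to exploit this gap by finding $\mathcal{M}_0$ for which $\min r(\mathcal{M}_t)$ actually tracks the inner barrier down while $\max r(\mathcal{M}_t)$ remains positive.

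The natural candidate is an off-center sphere $\mathcal{M}_0=\partial B_R(p)$ with $0<|p|<R$ and $\rho_0:=R-|p|\ll P_0:=R+|p|$. The flow preserves the rotational symmetry about the line through the origin and $p$, so $\mathcal{M}_t$ is always a surface of revolution and the extremal values $\rho(t)=\min r(\mathcal{M}_t)$ and $P(t)=\max r(\mathcal{M}_t)$ are attained at the two axial points, where the outward normal is radial. Thus
\[
\dot\rho=-\rho^{\alpha}\sigma_2^{1/2}(q_{\text{near}}),\qquad \dot P=-P^{\alpha}\sigma_2^{1/2}(q_{\text{far}}).
\]
For short times, $\mathcal{M}_t$ remains close to a sphere of effective radius $R(t)\approx(\rho+P)/2$, so $\sigma_2^{1/2}\approx 2/(\rho+P)$. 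Integrating the resulting approximate system in the regime $\rho\ll P$ yields $P(t)^{2-\alpha}\approx P_0^{2-\alpha}-2(2-\alpha)t$ together with
\[
\rho(t)^{1-\alpha}\approx \rho_0^{1-\alpha}-P_0^{1-\alpha}+P(t)^{1-\alpha}.
\]
For $\alpha<1$ the latter reaches zero at $t^{*}\sim\rho_0^{1-\alpha}P_0/(2(1-\alpha))$ with $P(t^{*})$ still close to $P_0$, forcing $\mathcal{R}(t^{*})=\infty$. Since $\rho_0$ can be chosen arbitrarily small, the pinching time $t^{*}$ is arbitrarily short, which justifies a posteriori the short-time sphere approximation via a routine second-fundamental-form estimate and the short-time persistence of 2-convexity and starshapedness.

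The hardest part is that the off-center-sphere argument cleanly handles only $\alpha<1$: the same formal ODE gives $P/\rho$ constant when $\alpha=1$ and predicts $P/\rho\to 1$ when $\alpha\in(1,2)$. For the range $1\le\alpha<2$, one would instead use a more eccentric rotationally symmetric initial datum, designed so that during the collapse the curvature at the near point dominates, $\sigma_2^{1/2}(q_{\text{near}})\gg 1/R(t)$, allowing the small weight $r^{\alpha}$ at the near end to still produce finite-time pinching. In either case, the critical technical ingredient is the rigorous upper bound $\rho(t)\to 0$ complementing the easy comparison-principle lower bound; this requires a careful analysis of the axial near point together with control of the second fundamental form on the relevant time interval.
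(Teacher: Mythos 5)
There is a genuine gap, in fact two. First, the theorem demands a counterexample for every $\al<2$, and by your own computation the off-center-sphere mechanism dies at $\al=1$ (the ratio $P/\rho$ is constant) and works against you for $\al\in(1,2)$ (the ratio tends to $1$). Your fix for $1\le\al<2$ --- ``a more eccentric rotationally symmetric datum whose curvature at the near point dominates'' --- is not constructed, and that construction is precisely the heart of the matter. The paper builds it explicitly: a rotationally symmetric graphical subsolution whose profile is $\phi(\rho,t)=-|t|^\theta+|t|^{-\theta+\sigma\theta}\rho^2$ near the axis and $\sim\rho^{1+\sigma}$ outside, with $\sigma=(q\theta-1)/\theta$, $q=2-\al$, $\theta>1/q$, so that the lowest point sits at height $-|t|^\theta$ and descends to the origin as $t\to 0^-$, while $r^\al\sigma_2^{1/2}\ge C|t|^{\theta-1}\ge c\,|\partial_t Y|$; this single construction exploits $\al<2$ uniformly over the whole range, including the regime where your heuristic predicts rounding.

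Second, even in the range $\al<1$ your argument is not rigorous where it needs to be: the claim that $\sigma_2^{1/2}$ at the two axial points stays comparable to $2/(\rho+P)$ \emph{up to the pinch time} is not a ``routine second-fundamental-form estimate.'' The pinch time $t^{*}$ and the eccentricity $\rho_0$ are coupled, and short-time smooth dependence on the initial sphere gives a time window with no a priori relation to $t^{*}$; moreover the hard direction is exactly a positive lower bound on the speed at the near point (so that $\min r$ actually reaches $0$) together with an upper bound at the far point (so that $\max r$ survives), and neither is supplied. (For $\al<0$, also covered by the theorem, the factor $r^\al$ blows up near the origin and the sphere approximation degenerates further; and the assertion that the extrema of $r$ remain at the axial points needs proof once the surface is no longer a sphere.) The paper sidesteps all curvature analysis of the actual solution by pure comparison: the subsolution above, extended to a closed convex $C^{1,1}$ hypersurface containing $B_1(z)$ with $z=(0,\dots,0,10)$, forces $\mM_t$ to touch the origin at a finite time $t_0$, while a second comparison with a sped-up flow of $\partial B_1(z)$ (speed $ba\tilde r^\al\sigma_2^{1/2}$, $\tilde r=|\tilde X-z|$) keeps $B_{1/2}(z)$ inside $\mM_t$ on $(\tau,t_0)$, so $\max r>10$ as $\min r\to 0$; a final rescaling $\tilde\mM_t=a^{-\frac{1}{2-\al}}\mM_t$ removes the auxiliary constant $a$. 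To complete your proposal you would need substitutes for both barriers, which is essentially the paper's proof.
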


The organization of the paper is as follows. In Section \ref{secp} we introduce some basic notations
and establish evolution equations for basic geometric quantities. In Section \ref{sec2.0} we derive $C^0$ and $C^1$ estimates. We show that $\sigma_2^{1/2}$ is bounded along the normalized flow \eqref{0.5} in Section \ref{sec3.0}. Section \ref{sec4} is devoted to the proof of our main a priori estimate, Theorem \ref{th4.1}. This estimate
shows that the principal curvatures stay bounded under flow \eqref{0.5}. We obtain the convergence result in Section \ref{sec5}, which completes the proof
of Theorem \ref{th0.1}. In Section \ref{sec6} we give a counter example by showing that if $\alpha<2,$ the flow \eqref{0.1} may have unbounded ratio of radii.
This proves Theorem \ref{th0.2}.

\section*{Acknowledgements}
The author would like to thank the referee for his/her careful reading and helpful comments. In particular, the author would like to thank the referee for pointing
out a mistake in the proof of Theorem \ref{th4.1}.

\bigskip

\section{Preliminaries}
\label{secp}

Let us first recall some basic properties of a starshaped hypersurface $\mathcal{M}$ in $\mathbb{R}^{n+1}.$ Since $\mathcal {M}$ is starshaped,
for a suitable diffeomorphism $\xi(\cdot): \mathbb{S}^n\goto\mathbb{S}^n,$ the position vector of $\mathcal{M}$ can be written as
\[X(x)= r(\xi(x))\xi(x),\]
where $r(\xi(x))=|X(x)|$ is the radial function. Next, we will give the expressions of the induced metric, second fundamental form, and Weingarten curvatures of
$\mathcal{M}$ in terms of the radial function.

Let $e_1,\cdots, e_n$ be a smooth local orthonormal frame on $\mathbb{S}^n,$ and let
$\bn$ be the covariant derivative on $\s^n.$ Then in term of $r,$ we have
\[g_{ij}=r^2\delta_{ij}+\bn_{i}r\bn_{j}r,\]
\[g^{ij}=r^{-2}\lt(\delta_{ij}-\frac{\bn_{i}r\bn_{j}r}{r^2+|\bn r|^2}\rt),\]
\[\nu=\frac{r\xi-\bn r}{\sqrt{r^2+|\bn r|^2}},\]
\[h_{ij}=\frac{1}{\sqrt{r^2+|\bn r|^2}}\lt(r^2\delta_{ij}+2\bn_ir\bn_jr-r\bn_{ij}r\rt).\]
The principal curvatures of $\mathcal{M}$ are the eigenvalues of $h_{ij}$ with respect to $g_{ij},$ namely the solutions of
\[0=\det(h_{ij}-\kappa g_{ij})=\det(a_{ij}-\kappa\delta_{ij}),\]
where   $a_{ij}=\lt(g^{-\frac{1}{2}}\rt)^{il}h_{lm}\lt(g^{-\frac{1}{2}}\rt)^{mj},$ and
\[\lt(g^{-\frac{1}{2}}\rt)^{ij}=r^{-1}\lt[\delta_{ij}-\frac{\bn_ir\bn_jr}{\sqrt{r^2+|\bn r|^2}(r+\sqrt{r^2+|\bn r|^2})}\rt]\] is the square root
of $g^{ij}.$ It is easy to see that if $\mathcal{M}_t$ is a family of starshaped hypersurfaces satisfying \eqref{0.5}, then
the radial function $r$ satisfies
\be\label{5.1}
\lt\{
\begin{aligned}
&r_t=-r^{\al-1}w\sigma_2^{1/2}+r,\\
&r(\cdot, 0)=r_0,\\
\end{aligned}
\rt.
\ee
where $r_0$ is the radial function of $\mathcal{M}_0,$ and $w=\sqrt{r^2+|\bn r|^2}.$

Next, we will derive some evolution equations for our normalized flow \eqref{0.5}.
We will use $\nabla$ to denote the covariant derivative with respect to the metric on $\mathcal{M}_t.$ Let $\{\tau_1, \cdots, \tau_n\}$
be a local orthonormal frame on $T\mathcal{M}_t,$ and recall the following identities:
\be\label{2.1.1}
X_{ij}=-h_{ij}\nu\,\,\,\, \mbox{(Gauss formula),}
\ee
\be\label{2.1.2}
\nu_i=h^l_i\tau_l\,\,\,\,\mbox{(Weingarten equation),}
\ee
and
\be\label{2.1.3}
\nabla_{sr}h_{ij}=\nabla_{ij}h_{rs}+h_{il}h^l_jh_{rs}-h_{ir}h_{sm}h^m_j+h_{sj}h^n_ih_{nr}-h_{ij}h^m_rh_{ms}\,\,\mbox{(Ricci identity)}.
\ee

We now consider equation \eqref{0.5} on $\mM_t.$ Let $\mathcal{A}$ be the vector space of $n\times n$ matrices and
\[\mathcal{A}_2=\{A=\{a_{ij}\}\in\mathcal{A}: \lambda(A)\in\Gamma_2\},\]
where $\lambda(A)=(\lambda_1, \cdots, \lambda_n)$ denotes the eigenvalues of $A.$ Let $F$ be the function defined by
$F(A)=f(\lambda(A))=\sigma_2^{1/2}(\lambda(A)),$ $A\in\mathcal{A}_2.$ In the rest of the paper, we will use the following notations,
\[F^{ij}=\frac{\partial F}{\partial a_{ij}}(A),\,\, F^{ij, kl}=\frac{\partial^2F}{\partial a_{ij}\partial a_{kl}}(A).\]
Since $F(A)$ depends only on the eigenvalues of $A,$ if $A$ is symmetric, so is $\{F^{ij}(A)\}.$ Moreover, when $A$ is diagonal,
$F^{ij}(A)=\frac{\partial f}{\partial\lambda_i}\delta_{ij}=f^i\delta_{ij}.$

\begin{lemma}
\label{lem2.1}
Denote $\Phi=r^\al\sigma_2^{1/2}$ and $F=\sigma_2^{1/2},$ then under the normalized flow \eqref{0.5}, we
have
\be\label{2.1.5}
\frac{\partial}{\partial t}g_{ij}=-2\Phi h_{ij}+2g_{ij},
\ee
\be\label{2.1.6}
\frac{\partial}{\partial t}\nu=g^{kl}\Phi_k\tau_l,
\ee
and
\be\label{2.1.7}
\frac{\partial}{\partial t}h^j_i=\Phi_i^j+\Phi h^k_ih^j_k-h^j_i.
\ee
\end{lemma}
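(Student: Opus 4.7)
The proof of each of the three identities is a direct computation from the flow $\partial_t X=-\Phi\nu+X$ using the structure equations recalled just above the lemma, namely the Gauss formula $X_{ij}=-h_{ij}\nu$, the Weingarten relation $\nu_i=h_i^{\ell}\tau_{\ell}$, and $\langle\nu,\tau_i\rangle=0$. I would work in a local orthonormal frame $\{\tau_i\}$ and, for the curvature identity, at a point where the induced Christoffel symbols vanish. Relative to the standard Huisken-type flow by $-\Phi\nu$, the only new feature is the extra $+X$ in the velocity; it is this term that produces the $+2g_{ij}$ in \eqref{2.1.5} and the closing $-h_i^j$ in \eqref{2.1.7}, while it leaves \eqref{2.1.6} unchanged because $X$ pairs to zero against $\nu$ in the relevant scalar step.

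For \eqref{2.1.5}, I differentiate $g_{ij}=\langle X_i,X_j\rangle$ in time, commute $\partial_t$ with $\partial_i$, and plug in $(\partial_t X)_i=-\Phi_i\nu-\Phi\nu_i+X_i$. The $\Phi_i\nu$ term drops because $\langle\nu,X_j\rangle=0$, Weingarten gives $\langle-\Phi\nu_i,X_j\rangle=-\Phi h_{ij}$, and the $X_i$ piece contributes $g_{ij}$; symmetrizing in $(i,j)$ yields \eqref{2.1.5}. For \eqref{2.1.6}, $|\nu|^2=1$ forces $\partial_t\nu$ to be tangential, so it is determined by $\langle\partial_t\nu,\tau_i\rangle=-\langle\nu,(\partial_t X)_i\rangle$; the $\nu_i$ and $X_i$ contributions vanish by orthogonality, leaving $\Phi_i$, and raising an index produces \eqref{2.1.6}.

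For \eqref{2.1.7} the cleanest route is to first compute the evolution of the lower form $h_{ij}=-\langle X_{ij},\nu\rangle$ and then raise the index. Differentiating and expanding $(\partial_t X)_{ij}=-\Phi_{ij}\nu-\Phi_i\nu_j-\Phi_j\nu_i-\Phi\nu_{ij}+X_{ij}$, and using $\langle\nu_{ij},\nu\rangle=-h_{i\ell}h_j^{\ell}$ (from differentiating $\langle\nu_i,\nu\rangle=0$ and applying Weingarten), $\langle X_{ij},\nu\rangle=-h_{ij}$, together with the fact that $\partial_t\nu$ is tangential, I obtain at the chosen point $\partial_t h_{ij}=\nabla_i\nabla_j\Phi-\Phi h_{i\ell}h_j^{\ell}+h_{ij}$. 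Raising an index via $h_i^j=g^{jk}h_{ki}$ and $\partial_t g^{jk}=2\Phi h^{jk}-2g^{jk}$ (read off from \eqref{2.1.5}), the $-2h_i^j$ contributed by the inverse metric combines with the $+h_i^j$ from raising the bottom $+h_{ij}$ to give the final $-h_i^j$, while $2\Phi h^{jk}h_{ki}$ combines with $-\Phi h^{jk}h_{ki}$ to give $+\Phi h_i^k h_k^j$, yielding \eqref{2.1.7}. The main bookkeeping hazard, and the step where I expect an error is most likely, is the sign in $\langle\nu_{ij},\nu\rangle=-h_{i\ell}h_j^{\ell}$: flipping it would invert the curvature-squared term in \eqref{2.1.7}.
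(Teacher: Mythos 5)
Your computations are correct, and for \eqref{2.1.5} and \eqref{2.1.6} they coincide with the paper's proof (time-differentiate $g_{ij}=\langle X_i,X_j\rangle$ and $\langle\nu,\tau_i\rangle=0$, insert $(\partial_t X)_i=-\Phi_i\nu-\Phi h_i^k\tau_k+\tau_i$). For \eqref{2.1.7} you take a genuinely different, Huisken-style route: you first derive the evolution of the covariant second fundamental form, $\partial_t h_{ij}=\nabla_i\nabla_j\Phi-\Phi h_{i\ell}h_j^{\ell}+h_{ij}$, via the Gauss formula, the identity $\langle\nu_{ij},\nu\rangle=-h_{i\ell}h_j^{\ell}$ (your sign there is right, and the index-raising below confirms it), and normal coordinates to kill the tangential part of $\partial_i\partial_jX$ against the tangential vector $\partial_t\nu$; you then raise an index using $\partial_t g^{jk}=2\Phi h^{jk}-2g^{jk}$, and the bookkeeping $2\Phi h^{jk}h_{ki}-\Phi h^{j}_{k}h^{k}_{i}=\Phi h^{k}_{i}h^{j}_{k}$ and $-2h^j_i+h^j_i=-h^j_i$ reproduces \eqref{2.1.7}. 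The paper instead never passes through $h_{ij}$ or $\partial_t g^{jk}$: it differentiates the Weingarten relation $\nu_i=h_i^k\tau_k$ in time, substitutes the already-established $\nu_t=g^{kl}\Phi_k\tau_l$ differentiated in space, and reads off $\partial_t h^j_i$ directly by pairing the tangential components with $\tau_j$. The paper's route is shorter and avoids both the inverse-metric evolution and the normal-coordinate argument; your route is more standard in the mean-curvature-flow literature and makes transparent how the extra $+X$ in the velocity contributes the scaling terms $+2g_{ij}$, $+h_{ij}$, and ultimately $-h^j_i$. Both are complete proofs of the lemma.
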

\begin{proof}
By the Weingarten equation, we have
\begin{align*}
\frac{\partial}{\partial t}g_{ij}&=\lt<(X_t)_i, X_j\rt>+\lt<X_i, (X_t)_j\rt>\\
&=2\lt<-\Phi_i\nu-\Phi h^k_i\tau_k+\tau_i, \tau_j\rt>\\
&=-2\Phi h_{ij}+2g_{ij}.
\end{align*}
This proves \eqref{2.1.5}.

To derive \eqref{2.1.6} one observes that
\[\partial_t\lt<\nu, \tau_i\rt>=\lt<\nu_t, \tau_i\rt>+\lt<\nu, (X_i)_t\rt>=0.\]
Therefore,
\[\lt<\nu_t, \tau_i\rt>=-\lt<\nu, (-\Phi)_i\nu-\Phi h^k_i\tau_k+\tau_i\rt>=\Phi_i,\]
which implies that
\[\nu_t=g^{kl}\Phi_k\tau_l.\]
Next, we differentiate equation \eqref{2.1.6} with respect to $\tau_i$ and get
\[\partial_t\nu_i=\lt(g^{kl}\Phi_k\tau_l\rt)_i=\partial_t\lt(h^k_i\tau_k\rt).\]
Thus,
\begin{align*}
&\lt(\partial_th^k_i\rt)\tau_k+h^k_i\lt(-\Phi\nu+X\rt)_k\\
&=g^{kl}\Phi_{ki}\tau_l-g^{kl}\Phi_kh_{il}\nu.\\
\end{align*}
This gives us that
\[\partial_th^k_ig_{kj}+h^k_i\lt<-\Phi_k\nu-\Phi h^r_k\tau_r+\tau_k, \tau_j\rt>=g^{kl}\Phi_{ki}g_{lj},\]
which implies,
\[\partial_th^j_i=\Phi^j_i+\Phi h^k_ih^j_k-h^j_i.\]
\end{proof}

\bigskip

\section{$C^0$ and $C^1$ estimates}
\label{sec2.0}
In this section, we will establish the $C^0$ and $C^1$ estimates. In particular, we will show that the flow \eqref{0.5}
preserves the starshapedness of the initial hypersurface $\mM_0$. We also want to point out that throughout this paper, if not further specified,
 we will denote by $C$ and $C_i$ for $i\in\mathbb{N},$ some positive constant, whose value may change from line to line.
\subsection{$C^0$ estimates}
In this subsection, we will establish the uniform upper and lower bounds for the radial function $r$ of the normalized flow \eqref{5.1}.
\begin{lemma}
\label{lem1.1}
Let $r(\cdot, t)$ be a positive, 2-convex smooth solution to \eqref{5.1} on $\mathbb{S}^n\times[0, T).$
If $\al\geq2,$ then there exists a positive constant  $C$ depending only on $\max_{\mathbb{S}^n}r(\cdot, 0)$
and $\min_{\mathbb{S}^n}r(\cdot, 0)$ such that
\be\label{1.2}
1/C\leq r(\cdot, t)\leq C,\,\,\forall t\in[0, T).
\ee
\end{lemma}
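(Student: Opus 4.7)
The plan is to apply the parabolic maximum principle (Hamilton's trick) directly to the scalar equation
\[
r_t=-r^{\alpha-1}w\,\sigma_2^{1/2}+r,\qquad w=\sqrt{r^2+|\bar\nabla r|^2},
\]
and use a simple curvature comparison with the enclosing/enclosed coordinate spheres at spatial extrema of $r$.

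First I would look at a point $p\in\mathbb{S}^n$ where $r(\cdot,t)$ attains its spatial maximum $r_{\max}(t)$. At such a point $\bar\nabla r=0$, hence $w=r_{\max}$, and the hypersurface $\mathcal{M}_t$ lies inside the round sphere of radius $r_{\max}$ and touches it at $X(p,t)$. By comparison of the second fundamental forms, every principal curvature satisfies $\kappa_i\ge 1/r_{\max}$, so $\sigma_2^{1/2}\ge 1/r_{\max}$. Plugging into the evolution equation and applying Hamilton's trick, I get
\[
\frac{d}{dt}r_{\max}\le -r_{\max}^{\alpha-1}\cdot r_{\max}\cdot\frac{1}{r_{\max}}+r_{\max}=-r_{\max}^{\alpha-1}+r_{\max}.
\]
Since $\alpha\ge 2$, the right-hand side is nonpositive whenever $r_{\max}\ge 1$; an ODE comparison argument then yields $r_{\max}(t)\le\max\{1,\,\max_{\mathbb{S}^n}r(\cdot,0)\}$ for all $t\in[0,T)$.

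Next I would run the symmetric argument at a spatial minimum $r_{\min}(t)$. There $\bar\nabla r=0$ again, $w=r_{\min}$, and $\mathcal{M}_t$ encloses the ball of radius $r_{\min}$ tangentially from outside, so $\kappa_i\le 1/r_{\min}$ and hence $\sigma_2^{1/2}\le 1/r_{\min}$. Hamilton's trick gives
\[
\frac{d}{dt}r_{\min}\ge -r_{\min}^{\alpha-1}+r_{\min},
\]
and for $\alpha\ge 2$ the right-hand side is nonnegative whenever $r_{\min}\le 1$, so ODE comparison forces $r_{\min}(t)\ge\min\{1,\,\min_{\mathbb{S}^n}r(\cdot,0)\}$ throughout $[0,T)$. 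Combining the two bounds gives the claimed constant $C$.

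There is no real obstacle here beyond getting the signs in the curvature comparison right; the only structural input needed is the condition $\alpha\ge 2$, which is exactly what makes the \emph{linear} dissipation term $+r$ dominated by $-r^{\alpha-1}$ for large $r$ and dominate it for small $r$. This is precisely why the theorem fails (and must fail) for $\alpha<2$, foreshadowing the counterexample in Section~\ref{sec6}.
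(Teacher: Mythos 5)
Your proposal is correct and follows essentially the same route as the paper: evaluate \eqref{5.1} at spatial extrema of $r$ (where $\bar\nabla r=0$, $w=r$), compare $\sigma_2^{1/2}$ with $1/r_{\max}$ resp.\ $1/r_{\min}$ (the monotonicity of $\sigma_2$ in $\Gamma_2$ justifying the comparison at the minimum), and conclude by the resulting ODE inequality $\tfrac{d}{dt}r_{\mathrm{ext}} \lessgtr r_{\mathrm{ext}}(1-r_{\mathrm{ext}}^{\alpha-2})$ that $r$ stays between $\min\{1,\min_{\mathbb{S}^n} r(\cdot,0)\}$ and $\max\{1,\max_{\mathbb{S}^n} r(\cdot,0)\}$. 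The only cosmetic difference is that you make the sphere-comparison for the principal curvatures explicit, which the paper leaves implicit.
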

\begin{proof}
Let $r_{\min}(t)=\min\limits_{\s^n}r(\cdot, t),\,\,\forall t\in(0, T).$ Note that at the point where $r_{\min}(t)$ is achieved, we have
 $\sigma_2^{1/2}\leq\frac{1}{r_{\min}(t)}.$ Then by equation \eqref{5.1} we get
\[\frac{d}{dt}r_{\min}\geq r_{\min}\lt(1-r_{\min}^{\al-2}\rt).\]
Therefore, when $\alpha=2,$ we get
\[\frac{d}{dt}r_{\min}\geq 0,\]
which yields $r(\cdot, t)\geq \min r(\cdot, 0), \forall t\in (0, T);$
when $\alpha>2,$ we may assume $\min r(\cdot, t)<1,$ otherwise we would be done. It follows that
\[\frac{d}{dt}r_{\min}\geq 0.\] This implies when $\alpha\geq 2$ we have
\[r(\cdot, t)\geq\min\{1, \min\limits_{\s^n}r(\cdot, 0)\}.\]
Similarly, we have
\[r(\cdot, t)\leq\max\{1, \max\limits_{\s^n}r(\cdot, 0)\}.\]
\end{proof}

\subsection{$C^1$ estimates}
In this subsection, we will apply the evolution equations derived in Section \ref{secp} to obtain the gradient estimate. This result yields that, if we start from a starshaped hypersurface $\mM_0,$ then as long as the flow exists, $\mM_t$ remains starshaped.
\label{sub2.2.0}
\begin{lemma}
\label{lem2.2}
Let $X(\cdot, t)$ be a family of smooth, 2-convex hypersurfaces that solves the normalized flow \eqref{0.5} on $\mathbb{S}^{n}\times [0, T).$
Denote $u=\lt<X, \nu\rt>,$ then there exists a constant $C$ depending on $\mM_0,$ $|X|_{C^0},$ and $n$ such that
\be\label{2.2.1}
1/C< u<C,\,\,t\in[0, T).
\ee
\end{lemma}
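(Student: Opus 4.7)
The upper bound is immediate from Cauchy–Schwarz and Lemma \ref{lem1.1}: $u = \langle X,\nu\rangle \leq |X| = r \leq C$. For the lower bound, note that on a starshaped hypersurface one has $u = r^2/w$ with $w = \sqrt{r^2+|\bar\nabla r|^2}$, so $u\geq 1/C$ is equivalent to the gradient estimate $|\bar\nabla r|\leq Cr$ on $\mathbb{S}^n$, i.e.\ to the preservation of starshapedness along \eqref{0.5}.

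The plan is to derive the evolution equation for $u$ along the normalized flow and then apply the parabolic maximum principle to an auxiliary function of the form $\Psi = -\log u + \gamma|X|^2/2$ for a suitably small constant $\gamma>0$. Writing $\Phi = r^\alpha F$ with $F = \sigma_2^{1/2}$, and combining Lemma \ref{lem2.1} with the standard identities
\[\nabla_i u = h^k_i\langle X,\tau_k\rangle,\qquad \nabla_{ij}u = g^{kl}\langle X,\tau_l\rangle\nabla_k h_{ij} + h_{ij} - u\,h^k_ih_{kj},\]
the Codazzi identity, and the homogeneity $F^{ij}h_{ij}=F$, a direct computation yields
\[\partial_t u - F^{ij}\nabla_{ij}u = (\alpha-1)r^\alpha F - F + u\bigl(1+F^{ij}h^k_ih_{kj}\bigr) - \alpha r^{\alpha-2}Fu^2 + (r^\alpha-1)g^{kl}\langle X,\tau_l\rangle\nabla_k F,\]
where I used $\nabla_k r = r^{-1}\langle X,\tau_k\rangle$ to expand $\nabla_k\Phi$. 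At a maximum of $\Psi$, the first-order condition $\nabla_i u = \gamma u\langle X,\tau_i\rangle$ pins down $\nabla u$ in terms of $X^T$, which allows the $\nabla F$ contribution to be absorbed; the second-order test $F^{ij}\nabla_{ij}\Psi\leq 0$ together with the favorable sign of $(\alpha-1)r^\alpha F\geq 0$ (using $\alpha\geq 2$ and $r\geq c$ from Lemma \ref{lem1.1}) and the positivity of $uF^{ij}h^k_ih_{kj}$ should then force a contradiction whenever $u$ is too small.

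The main obstacle is that no curvature estimate is available at this stage, so the ``bad'' term $-F$ on the right-hand side is a priori unbounded. The remedy is to exploit the good dissipation $uF^{ij}h^k_ih_{kj}$ extracted from the Hessian of $u$: by the Cauchy–Schwarz inequality $(F^{ij}h_{ij})^2\leq \bigl(\sum_i f^i\bigr)F^{ij}h^k_ih_{kj}$ applied in a diagonalizing frame, this quantity controls $F^2/\sum_i f^i$, and the Newton–MacLaurin inequalities on $\Gamma_2$ convert this into effective control of $F$ itself. The calibration of $\gamma$ is then delicate: it must be small enough that the $\gamma|X|^2$ perturbation stays under control, yet large enough that the second-order inequality has a strict sign. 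Once this balance is achieved, the maximum principle produces the required lower bound $u\geq 1/C$.
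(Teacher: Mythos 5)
Your upper bound and your general plan (maximum principle for $-\log u$ corrected by a function of $S=|X|^2$, as in \cite{GLM09}) are in the same family as the paper's argument, and your evolution identity for $u$ with the \emph{unweighted} operator $\partial_t-F^{ij}\nabla_{ij}$ is in fact correct. But the two mechanisms you invoke at the crucial points do not work as stated. First, the third-order term: with your operator the quantity $(r^\al-1)\lt<X,\nabla F\rt>$ survives, and the first-order condition at the maximum of $\Psi$ only pins down $\nabla u=h(X^T,\cdot)$; it gives no information whatsoever about $\nabla F=F^{ij}\nabla h_{ij}$, and at this stage of the paper there is no bound on curvature or its derivatives, so this term cannot be ``absorbed.'' The paper avoids the issue by linearizing with the weighted operator $\mL=\partial_t-r^\al F^{ij}\nabla_{ij}$: then the two contributions $\pm r^\al\lt<X,\nabla F\rt>$ cancel identically in \eqref{2.2.4}. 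The weight matters for a second reason: with $\mL$ the $u^{-1}$-singular terms combine into $\frac{(2-\al)\Phi}{u}\le 0$, which is exactly where $\al\ge 2$ is used; with your unweighted operator they combine into $\frac{F\lt(1-(\al-1)r^\al\rt)}{u}$, whose sign is favorable only if $r\ge(\al-1)^{-1/\al}$, and Lemma \ref{lem1.1} only guarantees $r\ge\min\{1,\min r_0\}$, so this term can be positive and of size $F/u$, which the available good terms cannot dominate.

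Second, your proposed control of $F$ by the dissipation is insufficient. Cauchy--Schwarz gives $F^{ij}h^k_ih_{kj}\ge F^2/\sum_i f^i$, but on $\Gamma_2$ one has $\sum_i f^i\sim \sigma_1/F$, so this lower bound is of order $F^3/\sigma_1$, which does \emph{not} control $F$: for example $\kappa\approx\lt(M^3,\tfrac{1}{(n-1)M},\dots,\tfrac{1}{(n-1)M}\rt)$ has $F\to\infty$ while $F^3/\sigma_1$ stays bounded. What the paper uses instead is the concavity inequality $\sum_i f^i\kappa_i^2\ge f(\kappa_1^2,\dots,\kappa_n^2)\ge c(n)F^2$ (equation \eqref{2.2.6'}), which is the correct quantitative input. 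Finally, note a structural point you lose with the choice $\Psi=-\log u+\tfrac{\gamma}{2}S$ ($\gamma$ small): in the paper $\gamma(S)=\lambda/S$ has $\gamma'<0$, so the critical-point relation forces $h_{11}=2\gamma'u<0$, whence $\sum f^i<(n-1)f^1$, and this is used (after choosing $\lambda$ large) to absorb the $f^1$-terms coming from the second-order test; with your sign the critical point gives $h_{11}=\gamma u>0$ and this input is unavailable. So as written the proposal has genuine gaps at the $\nabla F$ term, at the $u^{-1}$-singular balance (where $\al\ge2$ must enter), and at the control of $F$.
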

\begin{proof}
The upper bound of $u$ is a direct consequence of Lemma \ref{lem1.1}. Therefore, in the following, we only need to show $u$ is bounded from below.
We will follow \cite{GLM09} considering
\[P=\gamma(S)-\log\lt<X, \nu\rt>,\]
where $S=\lt<X, X\rt>,$ $\gamma(S)=\frac{\lambda}{S},$ and $\lambda>0$ to be determined.

Assume $P$ achieves its maximum at an interior point $X_0\in \mathcal{M}_{t_0}.$ In the following, all calculations will be done at this point with respect to a local orthonormal frame $\tau_1, \cdots, \tau_n$. We can see that at $X_0$
\[P_i=\gamma'S_i-\frac{u_i}{u}=0.\]
By a straightforward calculation we get,
\[S_i=2\lt<X, \tau_i\rt>,\]
\[S_{ij}=2\lt<\tau_i, \tau_j\rt>-2\lt<X, h_{ij}\nu\rt>=2\delta_{ij}-2h_{ij}u,\]
and
\[S_t=2\lt<X, X_t\rt>=2\lt<X, -\Phi\nu+X\rt>=-2\Phi u+2S.\]
Therefore,
\be\label{2.2.3}
\begin{aligned}
\mL S&=S_t-r^\al F^{ij}S_{ij}\\
&=-2\Phi u+2S-r^\al F^{ij}\lt(2\delta_{ij}-2h_{ij}u\rt)\\
&=2S-2r^\al\sum f^i.
\end{aligned}
\ee
Moreover,
\begin{align*}
u_t&=\lt<X_t, \nu\rt>+\lt<X, \nu_t\rt>\\
&=\lt<-\Phi\nu+X, \nu\rt>+\lt<X, \nabla\Phi\rt>\\
&=-\Phi+u+\lt<X, \al r^{\al-1}F\nabla r+r^\al\nabla F\rt>,\\
\end{align*}
\[u_i=\lt<X_i, \nu\rt>+\lt<X, h_{ik}\tau_k\rt>=h_{ik}\lt<X, \tau_k\rt>,\]
and
\[u_{ij}=h_{ij}+\lt<X, h_{ijk}\tau_k\rt>-h_{ik}h_{kj}u.\]
Therefore,
\be\label{2.2.4}
\begin{aligned}
\mL u&=u_t-r^{\al}F^{ij}u_{ij}\\
&=-\Phi+u+\al r^{\al-1}f\lt<X, \nabla r\rt>+r^\al\lt<X, \nabla f\rt>-r^\al f-r^\al\lt<X, \nabla f\rt>
+r^\al u\sum  F^{ij}h_{ik}h_{kj}\\
&=-2\Phi+u+\al r^{\al-1}f\lt<X, \nabla r\rt>+r^\al u\sum F^{ij}h_{ik}h_{kj}.
\end{aligned}
\ee

Since at $X_0$ we have $P_i=0,$ it follows that
\be\label{3.3*}
2\gamma'\lt<X, \tau_i\rt>=\frac{h_{ik}\lt<X, \tau_k\rt>}{u}.
\ee
If at this point, $\lt<X, \tau_i\rt>=0$ for all $1\leq i\leq n,$ we would get
$\lt<X,\nu\rt>^2=|X|^2,$ then by Lemma \ref{lem1.1} we are done. So we may assume
$\lt<X,\nu\rt>^2<|X|^2$ at $X_0.$ We may also choose a smooth local orthonormal frame on $\mathcal{M}_{t_0}$
such that at $X_0,$ $\lt<X, \tau_i\rt>=0,\,\,i\geq 2.$
Therefore at this point, we have $h_{11}=2\gamma'u$ and $h_{1i}=0$ for $i\geq 2.$
We may also rotate
$\{\tau_2, \cdots, \tau_n\}$ such that $h_{ij}=\kappa_i\delta_{ij}$ is diagonal.

Next, we compute
\[P_t=\gamma' S_t-\frac{u_t}{u},\]
and
\[P_{ij}=\gamma'S_{ij}+\gamma''S_iS_j-\frac{u_{ij}}{u}+\frac{u_iu_j}{u^2}.\]
Therefore
\be\label{2.2.2}
\begin{aligned}
\mL P&=P_t-r^\al F^{ij}P_{ij}\\
&=\gamma'\mL S-\frac{1}{u}\mL u-r^\al\gamma'' F^{ij}S_iS_j-r^\al F^{ij}\frac{u_iu_j}{u^2}.
\end{aligned}
\ee
Substituting equations \eqref{2.2.3} and \eqref{2.2.4} into \eqref{2.2.2} and
applying the maximum principle we obtain at $X_0$,
\begin{align*}
&\gamma'\lt(2S-2r^\al\sum f^i\rt)-\frac{1}{u}\lt(-2\Phi+u+\al r^{\al-1}f\lt<X,\nabla r\rt>+r^\al u\sum f^i\kappa_i^2\rt)\\
&-r^\al\gamma''f^iS_i^2-r^\al f^i\frac{u_i^2}{u^2}\geq 0.\\
\end{align*}
Since $r^2=S,$ we get $2rr_i=2\lt<X, \tau_i\rt>,$ and
\[\lt<X, \nabla r\rt>=\frac{\lt<X, \tau_1\rt>^2}{r}=\frac{S-u^2}{r}=r-\frac{u^2}{r}.\]
Hence, we get
\be\label{2.2.5}
\begin{aligned}
&2\gamma'S+2\frac{\Phi}{u}-\frac{\al r^{\al-1}f}{u}\lt(r-\frac{u^2}{r}\rt)\\
&\geq 2r^\al\gamma'\sum f^i+r^\al\sum f^i\kappa_i^2+4r^\al\lt[\lt(\gamma''+(\gamma')^2\rt)f^1\lt(S-u^2\rt)\rt]+1,\\
\end{aligned}
\ee
where we used equation $\gamma'S_i=\frac{u_i}{u}$.
Substituting $\gamma=\frac{\lambda}{S}$ into \eqref{2.2.5} we obtain
\begin{align*}
&-\frac{2\lambda}{S}+\al r^{\al-2}fu+\frac{(2-\al)\Phi}{u}+4r^\al\lt(\frac{2\lambda}{S^3}+\frac{\lambda^2}{S^4}\rt)f^1u^2\\
&\geq 1-2\frac{r^\al\lambda}{S^2}\sum f^i+r^\al\sum f^i\kappa_i^2+4r^\al\lt(\frac{2\lambda}{S^3}+\frac{\lambda^2}{S^4}\rt)f^1S.\\
\end{align*}

First, since $\al\geq 2,$ we have $\frac{(2-\al)\Phi}{u}\leq 0.$
Moreover, differentiating
\[\beta f^2=\beta\sigma_2=S_2,\,\,\mbox{where $\beta= {n\choose 2},$}\] with respect to $\lambda_i$ we get
\[2\beta ff^i=S_1(\kappa|i)=\sum\limits_{j\neq i}\kappa_j.\]
This together with $h_{11}=-2\frac{\lambda}{S^2}u<0$ implies
\[2\beta f\sum f^i=(n-1)S_1(\kappa)<(n-1)S_1(\kappa|1)=2(n-1)\beta f f^1.\]
Therefore, we get
\[1\leq\sum f^i<(n-1)f^1,\]
where the first inequality comes from the concavity of $f$ (see \cite{GS09}).
Moreover, we can always choose $\lambda>0$ large such that
\[\frac{-2r^\al\lambda}{S^2}(n-1)f^1+2r^\al\lt(\frac{2\lambda}{S^3}+\frac{\lambda^2}{S^4}\rt)f^1S>0.\]
Thus we have
\be\label{2.2.6}
\begin{aligned}
&\al r^{\al-2}fu+4r^\al\lt(\frac{2\lambda}{S^3}+\frac{\lambda^2}{S^4}\rt)f^1u^2\\
&\geq 1+\frac{2\lambda}{S}+r^\al\sum f^i\kappa_i^2+2r^\al\lt(\frac{2\lambda}{S^3}+\frac{\lambda^2}{S^4}\rt)f^1S.
\end{aligned}
\ee
Since $f$ is concave, we have
\be\label{2.2.6'}
\sum f^i\kappa_i^2\geq f(\kappa_1^2, \cdots, \kappa_n^2)>c(n)f^2.
\ee
Case 1. When
$\al r^{\al-2}fu\geq 1+ \frac{2\lambda}{S}+r^\al\sum f^i\kappa_i^2\geq 1+\frac{2\lambda}{S}+c(n)f^2,$ we get
\[u\geq\frac{C_1}{f}+c(n)f>C_2.\]
Case 2. When $\al r^{\al-2}fu< 1+\frac{2\lambda}{S}+r^\al\sum f^i\kappa_i^2,$ we get
\[4r^\al\lt(\frac{2\lambda}{S^3}+\frac{\lambda^2}{S^4}\rt)f^1u^2>2r^\al\lt(\frac{2\lambda}{S^3}+\frac{\lambda^2}{S^4}\rt)f^1S.\]
This gives $u^2>\frac{S}{2}.$

Combining case 1 and case 2 we  conclude that $u$ is bounded from below at $X_0$, which in turn implies that $u$ is bounded from below everywhere.
Hence, we proved this lemma.
\end{proof}
For later usage, we want to point out that Lemma \ref{lem2.2} implies that $|\nabla r(\cdot, t)|<C,$ for $t\in [0, T).$

\bigskip
\section{Bound on $F$}
\label{sec3.0}

In this section we will show that along the flow, $F=\sigma_2^{1/2}$ is bounded from above and below.
\begin{lemma}
\label{lem3.1}
Under the normalized flow \eqref{0.5}, there exists a constant $C$ depending only on $\mM_0$ and $r$,such that
\be\label{3.1}
F>\frac{1}{C}.
\ee
\end{lemma}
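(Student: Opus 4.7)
The plan is to bound $\Phi:=r^\al F$ from below, rather than $F$ directly: Lemma \ref{lem1.1} gives $r\le C$, so any positive lower bound on $\Phi$ immediately yields one on $F=\Phi/r^\al$.

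First I would derive the evolution equation for $\Phi$. Differentiating $\Phi=r^\al F$ in $t$, substituting $r_t=-r^{\al-1}wF+r$ from \eqref{5.1}, inserting the consequence
\[
\partial_t F=F^{ij}\nabla_{ij}\Phi+\Phi\sum f^i\kappa_i^2-F
\]
of Lemma \ref{lem2.1} and the $1$-homogeneity of $F$ (so $F^i_j h^j_i = F$), and rewriting $-\al r^{2\al-2}wF^2=-\al\Phi^2/u$ via the standard identity $u=r^2/w$ (which follows from the formulas in Section \ref{secp}), I expect to reach
\[
\partial_t\Phi=r^\al F^{ij}\nabla_{ij}\Phi+r^\al\Phi\sum f^i\kappa_i^2+(\al-1)\Phi-\al\Phi^2/u.
\]

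Next I would apply the parabolic maximum principle at a spatial minimum of $\Phi(\cdot,t)$. There $\nabla\Phi=0$ and $\nabla_{ij}\Phi$ is positive semidefinite; since $\{F^{ij}\}$ is positive definite on $\Gamma_2$ (each $f^i=\sigma_1(\kappa|i)/(2F)>0$ by the standard G\r{a}rding cone inequalities), the term $r^\al F^{ij}\nabla_{ij}\Phi$ is nonnegative, as is $r^\al\Phi\sum f^i\kappa_i^2$. Dropping both leaves
\[
\partial_t\Phi\;\geq\;\Phi\lt[(\al-1)-\al\Phi/u\rt].
\]
Combined with $u\geq 1/C$ from Lemma \ref{lem2.2}, the Hamilton-type ODE principle applied to the spatial minimum yields
\[
\frac{d}{dt}\Phi_{\min}(t)\;\geq\;\Phi_{\min}(t)\lt[(\al-1)-\al C\Phi_{\min}(t)\rt].
\]
Since $\al\ge 2$ forces $\al-1\ge 1>0$, the right-hand side is strictly positive whenever $\Phi_{\min}(t)<(\al-1)/(2\al C)$, so $\Phi_{\min}$ cannot descend below $c_0:=\min\{\Phi_{\min}(0),(\al-1)/(2\al C)\}>0$. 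With $r\le C$ this gives $F\geq c_0/C^\al$, which is \eqref{3.1}.

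The main obstacle I anticipate is the choice of auxiliary quantity. A direct maximum-principle attack on $F$ (or $1/F$) forces one to expand $F^{ij}\nabla_{ij}(r^\al F)$ and control the term $\al r^{\al-1}F\cdot F^{ij}\nabla_{ij}r$; using $\nabla_{ij}r=(\delta_{ij}-h_{ij}u-r_ir_j)/r$, this brings in $h_{ij}$ with a sign one cannot pin down in the $2$-convex (rather than convex) setting. Working with $\Phi=r^\al F$ sidesteps the issue: its own Hessian is automatically positive semidefinite at a spatial minimum, and the only negative contribution, $-\al\Phi^2/u$, is quadratic in $\Phi$ and hence dominated by the linear driving term $(\al-1)\Phi$ once $u$ is bounded below, which is exactly what Lemma \ref{lem2.2} supplies.
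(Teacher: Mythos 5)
Your proposal is correct and follows essentially the same route as the paper: bound $\Phi=r^\al F$ from below by applying the maximum principle at the spatial minimum, reduce to the ODE inequality $\frac{d}{dt}\Phi_{\min}\geq\Phi_{\min}\lt[(\al-1)-C\al\Phi_{\min}\rt]$, and conclude with Lemmas \ref{lem1.1} and \ref{lem2.2}. The one slip is that you pair $r_t$ from \eqref{5.1} (radial parametrization, fixed $\xi\in\s^n$) with $\partial_t F$ from Lemma \ref{lem2.1} (flow parametrization), so your drift term $-\al\Phi^2/u$ should read $-\al\Phi^2 u/r^2$, as in the paper's \eqref{3.5}. Since $u\leq r\leq w=r^2/u$ gives $u/r^2\leq 1/u$, the inequality you state is weaker than, and implied by, the correct one, so the argument still closes and the conclusion is unaffected.
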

\begin{proof}
Let $\Phi=r^\al F,$
recall the evolution equation \eqref{2.1.7} we have
\be\label{3.3}
F_t=F^{ij}\lt(\Phi_{ij}+\Phi h_i^kh^j_k-h^j_i\rt).
\ee
Moreover, by \eqref{0.5} we get
\[r_t=\frac{\lt<X_t, X\rt>}{r}=-\frac{\Phi u}{r}+r.\]
Therefore, choosing an orthonormal frame such that $h^j_i=\kappa_i\delta_{ij}$ we get
\be\label{3.4}
\begin{aligned}
\frac{\partial\Phi}{\partial t}&=\al r^{\al-1}r_tF+r^\al F_t\\
&=\frac{\al\Phi}{r}\lt(-\frac{\Phi u}{r}+r\rt)+r^\al F^{ij}\lt(\Phi_{ij}+\Phi h^k_ih^j_k-h_i^j\rt)\\
&=\frac{\alpha\Phi}{r}\lt(-\frac{\Phi u}{r}+r\rt)+r^\al F^{ii}\Phi_{ii}+r^\alpha\Phi\sum f^i\kappa_i^2-\Phi.
\end{aligned}
\ee

Let $\Phi_{\min}(t)=\min\limits_{x\in\s^n}\Phi(x, t),$ then $\Phi_{\min}$ satisfies
\[\frac{d}{dt}\Phi_{\min}\geq-\frac{\al\Phi_{\min}^2u}{r}+(\al-1)\Phi_{\min}+r^\al\Phi_{\min}\sum f^i\kappa_i^2.\]
Thus we have
\[\frac{d}{dt}\Phi_{\min}\geq\Phi_{\min}\lt[(\al-1)-\frac{\al u}{r}\Phi_{\min}\rt].\]
We can see that, when $\Phi_{\min}<\frac{r(\al-1)}{u\al},$ then $\frac{d}{dt}\Phi_{\min}\geq 0.$
Therefore, we conclude that
\[\Phi_{\min}\geq\min\lt\{\min\limits_{\s^n}\Phi(\cdot, 0), \min\limits_{\s^n\times[0, T)}\frac{r(\al-1)}{u\al}\rt\}.\]
Together with Lemma \ref{lem1.1} and Lemma \ref{lem2.2} we get $F$ is bounded from below.
\end{proof}

\begin{lemma}
\label{lem3.2}
Under the normalized flow \eqref{0.5}, there exists a constant $C$ depending on $\mM_0$, $u,$ and $r$ such that
\be\label{3.2}
F<C.
\ee
\end{lemma}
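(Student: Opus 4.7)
The plan is to introduce an auxiliary function that combines $\Phi = r^\al F$ with the support function $u = \lt<X,\nu\rt>$ and apply the parabolic maximum principle. By Lemma \ref{lem2.2}, $u_0 := \inf_{\s^n\times[0,T)} u > 0$; I fix a constant $a \in (0, u_0)$ and set
\[
W = \log\Phi - \log(u-a),
\]
which is smooth and well defined on $\s^n\times[0,T)$. A uniform upper bound on $W$ immediately yields an upper bound on $\Phi$, hence on $F$ via Lemma \ref{lem1.1}.

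I would then compute $\mL W$ for $\mL = \partial_t - r^\al F^{ij}\nabla_{ij}$. From the identity \eqref{3.4} already derived in the proof of Lemma \ref{lem3.1},
\[
\mL\Phi = -\frac{\al \Phi^2 u}{r^2} + (\al - 1)\Phi + r^\al\Phi\sum f^i\kappa_i^2,
\]
and from \eqref{2.2.4},
\[
\mL u = -2\Phi + u + \al r^{\al-1} f\lt<X,\bn r\rt> + r^\al u\sum f^i\kappa_i^2.
\]
At any interior space-time maximum of $W$ the identity $W_i = 0$ gives $\Phi_i/\Phi = u_i/(u-a)$, which cancels the quadratic gradient contributions in $W_{ij}$, leaving
\[
0 \le \mL W \;=\; \frac{\mL\Phi}{\Phi} - \frac{\mL u}{u-a}.
\]
When the two identities are substituted, the $\sum f^i\kappa_i^2$ contributions combine to
\[
r^\al\sum f^i\kappa_i^2\lt(1-\frac{u}{u-a}\rt) \;=\; -\frac{a\,r^\al}{u-a}\sum f^i\kappa_i^2,
\]
which is strictly negative; this negative sign is the entire payoff of the shift $a>0$.

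Finally, I would apply the concavity of $f = \sigma_2^{1/2}$ in the form \eqref{2.2.6'}, namely $\sum f^i\kappa_i^2 \ge c(n)f^2 = c(n)\Phi^2/r^{2\al}$, to upgrade the above into a coercive quadratic penalty
\[
-\frac{a\,c(n)\,\Phi^2}{(u-a)\,r^\al}
\]
in $\mL W$. All the remaining terms are at most linear in $\Phi$ with coefficients bounded uniformly by Lemmas \ref{lem1.1} and \ref{lem2.2} (and by the bound $|\bn r|\le C$ noted at the end of Section \ref{sec2.0}). Hence $0 \le \mL W$ forces a uniform upper bound on $\Phi$ at the maximum point, which combined with $r \ge 1/C$ yields \eqref{3.2}.

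The main obstacle is conceptual rather than computational: applying the maximum principle to $\Phi$ alone fails because the reaction term $r^\al\Phi\sum f^i\kappa_i^2$ in $\mL\Phi$ is positive and not \emph{a priori} controlled; subtracting $\mL u / u$ would cancel it but leave no usable sign. The trick is to use $\mL u/(u-a)$ with $a > 0$, producing a strictly negative multiple of $\sum f^i\kappa_i^2$ which concavity of $f$ then promotes to a $-c\Phi^2$ term that dominates all lower-order contributions.
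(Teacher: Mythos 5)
Your proposal is correct and follows essentially the same route as the paper: the paper also applies the maximum principle to $M=\log\Phi-\log(u-a)$ with $a=\tfrac12\min u$, combines \eqref{3.4} and \eqref{3.6} so that the $\sum f^i\kappa_i^2$ terms produce the negative coefficient $-\tfrac{a\,r^\al}{u-a}$, and then invokes the concavity inequality \eqref{2.2.6'} to reach $C_1F-C_2F^2-C_3\ge 0$. Your explicit remarks on the cancellation of gradient terms at the critical point and on why the shift $a>0$ is needed are just a more detailed rendering of the same argument.
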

\begin{proof}
Let $\mL:=\frac{\partial}{\partial t}-r^{\al} F^{ij}\nabla_{ij},$ by \eqref{3.4} we get
\be\label{3.5}
\mL\Phi=-\frac{\alpha\Phi^2u}{r^2}+\al\Phi+\Phi r^\al\sum f^i\kappa_i^2-\Phi.
\ee
We also recall that
\be\label{3.6}
\mL u=-2\Phi+u+\al r^{\al-1}F\lt<X, \nabla r\rt>+r^\al u\sum f^i\kappa_i^2.
\ee
Considering $M=\log\Phi-\log(u-a),$ where $a=\frac{1}{2}\min\limits_{\s^n\times[0, T)} u.$  At its maximum point, by equations \eqref{3.5} and \eqref{3.6} we have
\be\label{3.7}
\begin{aligned}
\mL M&=\frac{\mL \Phi}{\Phi}-\frac{\mL u}{u-a}\\
&=-\frac{\al\Phi u}{r^2}+(\al-1)+r^\al\sum f^i\kappa_i^2+\frac{2\Phi}{u-a}\\
&-\frac{u}{u-a}-\frac{\al\Phi}{r(u-a)}\lt<X,\nabla r\rt>
-r^\al\frac{u}{u-a}\sum f^i\kappa_i^2\geq 0.
\end{aligned}
\ee
Applying Lemmas \ref{lem1.1} and \ref{lem2.2} we get
\be\label{3.8}
C_1F-C_2\sum f^i\kappa_i^2-C_3\geq 0.
\ee
Substituting \eqref{2.2.6'} into \eqref{3.8} we obtian
\[C_1F-C_2F^2-C_3\geq 0.\]
Hence, $F$ is bounded from above.
\end{proof}

\bigskip
\section{$C^2$ estimates}
\label{sec4}

In this section we will show that the principal curvatures of $\mM_t$ remain bounded along the flow. Due to the complication of terms involving the third derivatives of $\mM_t$, we need to introduce new techniques to carefully analyze them. These are the most difficult estimates in this paper.
We prove
\begin{theorem}
\label{th4.1}
Under the normalized flow \eqref{0.5}, there exists a constant $C$ depending on $\mM_0, r, n, u$ and $F$,such that
\[|A|\leq C.\]
\end{theorem}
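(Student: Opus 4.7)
The plan is to bound the largest principal curvature $\kappa_{\max}$ by applying the parabolic maximum principle to an auxiliary function of the form
\[W \;=\; \log \kappa_{\max} \,-\, A\log(u-a) \,+\, \tfrac{B}{2}|X|^2,\]
where $a = \tfrac{1}{2}\min u > 0$ (available by Lemma~\ref{lem2.2}) and $A, B > 0$ are constants to be fixed. Once $\kappa_{\max}$ is controlled, the lower bound $F \geq 1/C$ of Lemma~\ref{lem3.1} combined with $\lambda \in \Gamma_2$ and the Newton--MacLaurin inequalities forces $|\kappa_i|\leq C$ for every $i$, hence $|A|\leq C$. At a space-time maximum of $W$, I would rotate the frame so $h_{ij}$ is diagonal with $h^n_n = \kappa_{\max}$ and argue with the smooth quantity $\log h^n_n - A\log(u-a) + \tfrac{B}{2}|X|^2$.

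The heart of the argument is the computation of $\mathcal{L} h^n_n$ where $\mathcal{L} = \partial_t - r^\alpha F^{ij}\nabla_{ij}$. Starting from \eqref{2.1.7}, expanding $\Phi = r^\alpha F$ by Leibniz, commuting covariant derivatives via the Ricci identity \eqref{2.1.3}, and invoking the standard identity
\[F_{nn} \;=\; F^{kl}\nabla_n\nabla_n h_{kl} \,+\, F^{kl,rs}\nabla_n h_{kl}\,\nabla_n h_{rs},\]
I expect to obtain, schematically,
\[\mathcal{L} h^n_n \;\leq\; r^\alpha F^{kl,rs}\nabla_n h_{kl}\,\nabla_n h_{rs} \,+\, \Phi\kappa_n^2 \,-\, r^\alpha\kappa_n^2 \sum_i f^i\kappa_i \,-\, h^n_n \,+\, \mathcal{R},\]
where $\mathcal{R}$ gathers cross terms of the type $r^{\alpha-1}\nabla r\cdot\nabla F$ and lower-order contributions. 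The first term is non-positive by concavity of $F = \sigma_2^{1/2}$ on $\Gamma_2$; homogeneity of degree one gives $\sum_i f^i\kappa_i = F$, producing an exact cancellation of $\Phi\kappa_n^2$ with $-r^\alpha F\kappa_n^2$. The contributions from $-A\log(u-a)$ and $\tfrac{B}{2}|X|^2$, computed via \eqref{2.2.3}--\eqref{2.2.4}, add a coercive term $\frac{A r^\alpha u}{u-a}\sum f^i\kappa_i^2 \geq cA r^\alpha F^2$, using the concavity inequality $\sum f^i\kappa_i^2 \geq c(n)F^2$ already exploited in Section~\ref{sec2.0}.

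The main obstacle, and the reason new techniques appear to be needed, is the careful treatment of the third-order gradient terms. The critical-point identity $\nabla_i W = 0$ gives
\[\nabla_i h^n_n \;=\; h^n_n\bigl(A\,\nabla_i u/(u-a) \,-\, B\langle X,\tau_i\rangle\bigr),\]
so that $|\nabla h^n_n| = O(\kappa_n^2)$ since $\nabla_i u = h_{ij}\langle X,\tau_j\rangle$. Substituting back reduces the residual cross terms in $\mathcal{R}$ to at most quadratic order in $\kappa_n$, absorbable by the coercive cubic term for $A$ large. The genuinely delicate step, likely the one where the referee flagged an error in a first draft, is the coupling between $r^{\alpha-1}\nabla r\cdot\nabla F$ and the $F^{kl,rs}$ block when $\alpha > 2$, where a naive Cauchy--Schwarz loses a factor of $\kappa_n$. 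Exploiting the explicit structure of $F^{kl,rs}$ for $F = \sigma_2^{1/2}$ on the diagonal frame (in particular its negative-definiteness on the trace-free part) together with a carefully weighted Cauchy--Schwarz should absorb the remainder into the good terms and yield $\kappa_n \leq C$ at the maximum, completing the estimate.
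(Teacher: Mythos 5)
Your overall framework coincides with the paper's: the paper also reduces $|A|\leq C$ to a bound on the largest curvature (via $H^2-|A|^2=2S_2>0$) and applies the maximum principle to $Q=\log H-\log(u-a)$ with $a=\tfrac12\min u$, so the choice of test function and the cancellation $\Phi\kappa_n^2-r^\al F\kappa_n^2=0$ coming from homogeneity are fine. The gap is in the two places where you wave your hands. First, your ``coercive term'' is not coercive: bounding $\sum f^i\kappa_i^2\geq c(n)F^2$ is useless because $F$ is already bounded above and below (Lemmas \ref{lem3.1}--\ref{lem3.2}), so this estimate turns the good term into a bounded constant; and even kept in full, $\sum f^i\kappa_i^2=\frac{1}{2\beta F}(S_2H-3S_3)$ grows at most \emph{linearly} in $\kappa_1$ in the dangerous regime (on $\Gamma_2$ with $F$ bounded one has $f^1=\frac{H-\kappa_1}{2\beta F}\sim\kappa_1^{-1}$), so there is no ``coercive cubic term'' available to absorb quadratic remainders. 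Second, and more seriously, the critical-point identity controls only $\nabla_i h^n_n$ (resp.\ $\nabla_i H$), \emph{not} $\nabla F=F^{kk}\nabla h_{kk}$; hence ``substituting back'' does not reduce the cross term $2\al r^{\al-1}\nabla r\cdot\nabla F$ to quadratic order. The only tool against that term is the concavity block $F^{pq,rs}h_{pqk}h_{rsk}$, and its useful negative part in the relevant direction (e.g.\ $-\frac{1}{\beta F}\sum_{p>1}h^2_{pp1}$ from the GRW-type lemma, or the $-F_k^2/F$ piece of the identity $2\beta FF^{pq,rs}=S^{pq,rs}_2-2\beta F^{pq}F^{rs}$) has strength of order $f^1\sim\kappa_1^{-1}$. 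A weighted Cauchy--Schwarz therefore forces the weight $\lambda\lesssim f^1$ and produces a penalty $C/\lambda\sim\kappa_1$ --- exactly the same order as the available good term --- so the ``carefully weighted Cauchy--Schwarz'' you invoke cannot close the estimate by itself; this is precisely the step the referee's comment concerns.

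This is why the paper's proof is a two-step bootstrap rather than a one-shot absorption. In Step 1 it takes $\lambda=\eta f^1$, uses the Guan--Qiu lemma on $S_2H-3S_3$ when $\eta f^1\geq1$, and otherwise uses $f^1f^n\geq C_6$ to get $|\kappa_n|\leq\tilde C_0$ and then $|\kappa_i|\leq\tilde C_1$ for all $i\geq2$ (via $f^i\geq C\kappa_1$ for $i\geq 2$ and $f^1\kappa_1\geq c_1$). In Step 2, with the lower curvatures already bounded, it chooses the fractional weights $\lambda=\eta(f^1)^{2/3}$, $\tilde a=\eta_1(f^1)^{1/3}$, splits into the cases $|H_1-h_{111}|\geq\tilde a|H_1|$ and $<\tilde a|H_1|$, uses the algebraic lemma of \cite{GRW15} to extract $-\frac{1}{\beta F(n-1)}(H_1-h_{111})^2$, and concludes $(f^1)^{5/3}\kappa_1^2\leq C$, hence $\kappa_1\leq C$ via $f^1\kappa_1\geq c_1$. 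None of these ingredients (the case analysis in $f^1$, the Guan--Qiu lemma, the fractional powers of $f^1$ as weights) appears in your outline, and without something playing their role the order-counting in your plan does not close. Minor additional points: the term $\tfrac B2|X|^2$ buys nothing since $|X|$ is already bounded, and $\log\kappa_{\max}$ needs the usual care where eigenvalues coincide (the paper avoids this by working with $\log H$).
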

\begin{proof}
First, we note that $H^2-|A|^2=2S_2>0.$ Therefore, in order to show the principal curvatures are bounded, we only need to show $H$ is bounded.
Let us consider
\[Q=\log H-\log(u-a),\,\,\mbox{where $a=\frac{1}{2}\min u$.}\]
If $Q$ achieves its maximum at an interior point $X_0\in\mathcal{M}_{t_0}$, then at this point we have
\[\frac{H_i}{H}-\frac{u_i}{u-a}=0,\]
and
\[\mL Q=\frac{\mL H}{H}-\frac{\mL u}{u-a}\geq 0.\]
We will choose a local orthonormal frame in the neighborhood of $X_0$ such that at $X_0$ we have $h_{ij}=\kappa_i\delta_{ij}.$
By \eqref{2.1.7} we obtain the evolution equation for $H,$
\be\label{4.1}
\begin{aligned}
\frac{\partial}{\partial t}H&=\Phi_{kk}+\Phi h^k_ih^i_k-H\\
&=\nabla_k(\al r^{\al-1}r_kF+r^\al F_k)+\Phi|A|^2-H\\
&=\lt[\al r^{\al-1}r_{kk}F+\al(\al-1)r^{\al-2}r_k^2F+2\al r^{\al-1}r_kF_k+r^\al F_{kk}\rt]+\Phi|A|^2-H.\\
\end{aligned}
\ee

Since $h_{iikk}=h_{kkii}+h_{kk}h^2_{ii}-h_{ii}h^2_{kk},$ we get
\be\label{4.2}
\begin{aligned}
F_{kk}&=F^{ii}h_{iikk}+F^{pq, rs}h_{pqk}h_{rsk}\\
&=F^{ii}\lt(h_{kkii}+h_{kk}h^2_{ii}-h_{ii}h^2_{kk}\rt)+F^{pq, rs}h_{pqk}h_{rsk}\\
&=F^{ii}H_{ii}+Hf^i\kappa_i^2-|A|^2F+F^{pq,rs}h_{pqk}h_{rsk}.
\end{aligned}
\ee
Hence,
\be\label{4.3}
\begin{aligned}
\mL H&=\frac{\partial}{\partial t}H-r^\al F^{ii}H_{ii}\\
&=\al r^{\al-1}r_{kk}F+\al(\al-1)r^{\al-2}r_k^2F+2\al r^{\al-1}r_kF_k\\
&+r^\al H\sum f^i\kappa_i^2+r^\al F^{pq, rs}h_{pqk}h_{rsk}-H.\\
\end{aligned}
\ee

By a straightforward calculation we have,
\[r^2=S,\,\,2rr_i=2\lt<X, \tau_i\rt>,\,\,\text{and}\,\, 2r_i^2+2rr_{ii}=2-2h_{ii}u.\]
Thus
\be\label{add3}r_{ii}=\frac{1-h_{ii}u-r_i^2}{r}.\ee

Substituting \eqref{add3} into \eqref{4.3} and combining with \eqref{3.6} we obtain,
\be\label{4.4}
\begin{aligned}
\mL Q&=\frac{1}{H}\big[\al r^{\al-2}F(n-H u-|\nabla r|^2)+\al(\al-1)r^{\al-2}F|\nabla r|^2\\
&+2\al r^{\al-1}r_kF_k+r^\al H\sum f^i\kappa_i^2-H+r^\al F^{pq, rs}h_{pqk}h_{rsk}\big]\\
&-\frac{1}{u-a}\lt(-2\Phi+u+\al r^{\al-1}F\lt<X, \nabla r\rt>+r^\al u\sum f^i\kappa_i^2\rt)\geq 0.
\end{aligned}
\ee

By Lemma \ref{lem1.1}, Lemma \ref{lem2.2}, Lemma \ref{lem3.1}, and Lemma \ref{lem3.2}, equation \eqref{4.4} implies
\be\label{4.5}
\begin{aligned}
&\frac{1}{H}\lt(C_1+2\al r^{\al-1}r_kF_k+r^\al F^{pq, rs}h_{pqk}h_{rsk}\rt)\\
&+C_2-\frac{r^\al a}{u-a}\sum f^i\kappa_i^2\geq 0.\\
\end{aligned}
\ee

Now since
\[\beta F^2=S_2=\sum\limits_{p<q}\kappa_p\kappa_q,\,\,\beta=S_2(1, \cdots, 1)= {n\choose 2},\]
we have
\[2\beta F^{rs}F^{pq}+2\beta FF^{pq, rs}=S_2^{pq, rs}.\]
Therefore
\[F^{pq,rs}h_{pqk}h_{rsk}=\frac{S_2^{pq, rs}h_{pqk}h_{rsk}}{2\beta F}-\frac{F_k^2}{F}.\]
Furthermore, for any $\lambda>0$ we have
\[2\al r^{\al-1}r_kF_k\leq\frac{\lambda r^\al F_k^2}{HF}+\frac{\al^2 r^{\al-2}r^2_kHF}{\lambda}.\]
Therefore, equation \eqref{4.5} becomes
\be\label{4.6}
\begin{aligned}
&\frac{1}{H}\lt\{\frac{\al^2 r^{\al-2}r^2_kHF}{\lambda}+r^\al\lt(1-\frac{\lambda}{H}\rt)F^{pq, rs}h_{pqk}h_{rsk}\rt.\\
&\lt.+\frac{\lambda r^\al}{H}\cdot\frac{S_2^{pq, rs}h_{pqk}h_{rsk}}{2\beta F}\rt\}+C_2-\frac{ar^\al}{u-a}\sum f^i\kappa_i^2\geq 0,\\
\end{aligned}
\ee
where we used $\frac{H}{n}\geq F\geq\frac{1}{C}.$

From now on, we assume that $\kappa_1\geq\kappa_2\geq\cdots\geq\kappa_n$ at $X_0\in\mathcal{M}_{t_0}.$ In order to show $H$ is bounded at $X_0,$
we only need to show $\kappa_1$ is bounded. We will prove it in two steps. First, we will show $|\kappa_i|$ is bounded for $i\geq 2$.
Then, we will use this result to show that $\kappa_1$ is bounded.

\textbf{Step\,1.} In this step, we will show that when $\kappa_1>0$ large,
we have $|\kappa_i|$ is bounded by $\tilde{C}_1=\tilde{C}_1(r, u, F, \beta)$ for $i\geq 2$.

Note that,
\be\label{add1}
\begin{aligned}
S_2^{pq, rs}h_{pqk}h_{rsk}&=\sum\limits_{p\neq q}h_{ppk}h_{qqk}-\sum\limits_{p\neq q}h^2_{pqk}\\
&=H_k^2-\sum\limits_ph_{ppk}^2-\sum\limits_{p\neq q}h^2_{pqk}.\\
\end{aligned}
\ee

By the virtue of earlier estimates, \eqref{4.6} can be written as
\be\label{4.7}
\frac{C_3}{\lambda}+\frac{r^\al}{H}\lt(1-\frac{\lambda}{H}\rt)F^{pq, rs}h_{pqk}h_{rsk}
+\frac{\lambda r^\al}{2H^2\beta F}H^2_k+C_2-\frac{ar^\al}{u-a}\sum f^i\kappa_i^2\geq 0.
\ee

Moreover, since $Q_k=0$ at $X_0\in\mathcal{M}_{t_0},$ we have
\be\label{4.7'}
\frac{H_k}{H}=\frac{u_k}{u-a}=\frac{\kappa_k\lt<X, \tau_k\rt>}{u-a}.
\ee

Let $\lambda=\eta f^1,$ we may choose $\eta=\eta(r,u, F, \beta)>0$ small such that
\[\frac{\lambda r^\al}{2H^2\beta F}H_k^2=\sum\limits_k\frac{\eta r^\al f^1\kappa^2_k\lt<X, \tau_k\rt>^2}{2(u-a)^2\beta F}<\frac{ar^\al}{2(u-a)}\sum f^i\kappa_i^2,\]
where we used $f^1\leq f^2\leq \cdots\leq f^n.$
Notice that $f^1=\frac{H-\kappa_1}{2\beta F}<CH;$ so we can also assume $\eta>0$ so small that $\frac{\lambda}{H}<\frac{1}{2}.$
By the concavity of $F,$ \eqref{4.7} becomes
\be\label{4.8''}
\frac{C_3}{\eta f^1}+C_2-\frac{ar^\al}{2(u-a)}\sum\limits_if^i\kappa_i^2\geq 0.
\ee
Now, if $\eta f^1\geq 1$ at $X_0\in\mathcal{M}_{t_0},$ then we have
\begin{align*}
C_2+C_3\geq c_0\sum_if^i\kappa_i^2&=\frac{c_0}{2\beta F}\sum_i(H-\kappa_i)\kappa_i^2\\
&=\frac{c_0}{2\beta F}(S_2H-3S_3),
\end{align*}
where $S_3=\sum\limits_{1\leq i_1<i_2<i_3\leq n}\kappa_{i_1}\kappa_{i_2}\kappa_{i_3}$
and $c_0=c_0(r, u)$ is a positive constant.
It follows that when $H>0$ large,
\[\frac{3c_0}{2\beta F}S_3\geq \frac{c_0}{2\beta F}S_2H-C_2-C_3>0.\]
By Lemma 3 in \cite{GQ}, we conclude that $|\kappa_j|\leq \frac{7(n-1)S_2}{5\kappa_1}$, for $j\geq 2;$ so if $\eta f^1\geq 1$ then step 1 would be done.
Therefore, in the following, we will always assume $\eta f^1<1.$ In this case, \eqref{4.8''}
can be written as
\be\label{4.8}
\frac{C_3}{\lambda}-\frac{ar^\al}{2(u-a)}\sum f^i\kappa_i^2\geq 0.
\ee
This yields
\[\frac{ar^\al}{2(u-a)}f^1f^n\kappa_n^2\leq C_4.\]
Notice that
\be\label{4.0}
\begin{aligned}
f^1f^n&=\frac{(H-\kappa_1)(H-\kappa_n)}{4\beta^2F^2}\\
&>C_5(H^2-\kappa_1H)=C_5\lt(\sum\limits_{i\geq 2}\kappa_i^2+S_2+S_2(\kappa|\kappa_1)\rt)\geq C_6.
\end{aligned}
\ee
Here, the first inequality comes from the assumption that $\kappa_n<0,$ since if $\kappa_n\geq 0,$
we can get \eqref{4.8'} from Lemma \ref{lem3.2} directly. The second inequality in \eqref{4.0} is trivial if $S_2(\kappa|\kappa_1)\geq 0.$
When $S_2(\kappa|\kappa_1)<0,$ since
\[\lt(\sum_{i\geq 2}\kappa_i\rt)^2=\sum_{i\geq 2}\kappa_i^2+2S_2(\kappa|\kappa_1),\] we have
\[\sum_{i\geq 2}\kappa_i^2+S_2(\kappa|\kappa_1)>0,\]
and the second inequality still holds.

Therefore, we get
\[C_7\kappa_n^2\leq C_4,\]
which yields
\be\label{4.8'}
\kappa_n^2\leq\tilde{C}_0.
\ee
Hence, at the point where $Q$ achieves its interior maximum, $|\kappa_n|$ is bounded from above.
In the following, we want to show that $|\kappa_i|$ is bounded from above for all $i\geq 2$.

Now we assume at $X_0,$
\[\kappa_1\geq\kappa_2\geq \cdots\geq\kappa_k>0\geq\kappa_{k+1}\geq\cdots\geq\kappa_n\geq-\sqrt{\tilde{C_0}},\]
then it is easy to see that when $\kappa_1>2n\sqrt{\tilde{C}_0}$ we have
\be\label{5.12'}f^i=\frac{H-\kappa_i}{2\beta f}>C\kappa_1,\,\,\mbox{for $i\geq 2.$}\ee

Substituting \eqref{5.12'} into equation \eqref{4.8} we get
\be\label{add2}0\leq\frac{C_3}{\eta f^1}-\frac{ar^\al}{2(u-a)}\sum\limits_{i=2}^{n}f^i\kappa_i^2\leq
\frac{C_3}{\eta f^1}-\frac{ar^\al}{2(u-a)}C\kappa_1(|A|^2-\kappa_1^2).\ee
Note that
\[f^1\kappa_1=\frac{(H-\kappa_1)\kappa_1}{2\beta f}>CH(H-\kappa_1),\]
so by equation \eqref{4.0} we have $f^1\kappa_1>c_1$, where $c_1$ only depends on $\beta$ and $F.$
Therefore, \eqref{add2} implies
\[C_3>C_8(|A|^2-\kappa_1^2),\]
which gives $|\kappa_i|<\tilde{C}_1$ for $i\geq 2.$

\textbf{Step\, 2.} So far, we have proved that at the maximum point of $Q,$ if $\kappa_1>2n\sqrt{\tilde{C}_0}$ large, then
for $i\geq 2$ we have $|\kappa_i|<\tilde{C}_1$ for some constant $\tilde{C}_1=\tilde{C}_1(r, F, u, \beta).$
So, in this step, we will always assume $|\kappa_i|$ is bounded for $i\geq 2.$
Let us go back to equation \eqref{4.6}.

Without loss of generality, we may assume $\frac{\lambda}{H}<1/2$ and $\lambda<1.$
(Later we will see that in this step, we will choose $\lambda=\eta(f^1)^{2/3}.$ By step 1, we know
that $|\kappa_i|$ is bounded for $i\geq 2.$ Therefore, $f^1=\frac{\sum_{i\geq 2}\kappa_i}{2\beta F}$ is bounded from above, so we can always choose
$\eta>0$ small such that $\lambda<1.)$
Then, equation \eqref{4.6} implies
\be\label{4.10}
\frac{r^\al}{2H}F^{pq, rs}h_{pqk}h_{rsk}+\frac{r^\al\lambda}{2H^2F\beta}S^{pq, rs}_2h_{pqk}h_{rsk}+\frac{C_3}{\lambda}
-\frac{ar^\al\sum f^i\kappa_i^2}{u-a}\geq 0,
\ee

By a well known algebraic Lemma (see Lemma 7 in \cite{GRW15} for example), we have
\begin{align*}
&F^{pq, rs}h_{pqk}h_{rsk}\leq\sum\limits_{p\neq q}\frac{f^p-f^q}{\kappa_p-\kappa_q}h^2_{pqk}\\
&\leq 2\sum\limits_{p>1}\frac{f^p-f^1}{\kappa_p-\kappa_1}h^2_{pp1}=-\frac{1}{\beta F}\sum\limits_{p>1}h^2_{pp1}\\
&\leq-\frac{1}{\beta F(n-1)}(H_1-h_{111})^2.\\
\end{align*}
Moreover, \eqref{add1} yields
\begin{align*}
&S^{pq, rs}_2h_{pqk}h_{rsk}\leq \sum_kH^2_k-\sum_{p,k}h^2_{ppk}\\
&\leq H_1^2-h_{111}^2+\sum\limits_{k=2}^nH^2_k\leq C_9H^2+(H_1^2-h^2_{111}).\\
\end{align*}
Here the last inequality comes from \eqref{4.7'} and $|\kappa_i|<\tilde{C}_1,\,\,i\geq 2.$

Combining with equation \eqref{4.10} we get
\be\label{4.11}
\begin{aligned}
&-\frac{r^\al}{2H\beta F(n-1)}(H_1-h_{111})^2+\frac{r^\al\lambda}{2H^2F\beta}[C_9H^2+(H_1^2-h_{111}^2)]\\
&+\frac{C_3}{\lambda}-\frac{ar^\al\sum f^i\kappa_i^2}{u-a}\geq 0.\\
\end{aligned}
\ee

Now, let $\tilde{a}=\eta_1(f^1)^{1/3}$ and $\lambda=\eta (f^1)^{2/3}$. Here, we first choose $\eta_1>0$ such that $\tilde{a}<1,$ then we choose $\eta>0$ such that
$\frac{\eta}{\eta_1^2}\leq\frac{1}{n-1}$. We will divide this into two cases.

Case 1. If at $X_0\in\mathcal{M}_{t_0},$ $|H_1-h_{111}|\geq |\tilde{a}H_1|,$ since $\lambda\leq\frac{\tilde{a}^2}{n-1}$
we have, when $H>1$
\[-\frac{(H_1-h_{111})^2}{n-1}+\frac{\lambda}{H}H_1^2\leq -\frac{\tilde{a}^2H^2_1}{n-1}+\frac{\tilde{a}^2H_1^2}{(n-1)H}<0.\]
Therefore, equation \eqref{4.11} yields
\[\frac{C_4}{\lambda}\geq C_{10}f^1\kappa^2_1.\]
Multiplying by $\lambda$ on both sides we get
\[(f^{1})^{5/3}\kappa_1^2\leq C_{11}.\]
Since $f^1\kappa_1\geq c_1$ we conclude that $\kappa_1\leq\tilde{C}_2.$

Case 2. If at $X_0\in\mathcal{M}_{t_0},$ $|H_1-h_{111}|<|\tilde{a}H_1|,$ then we have
\[|h_{111}|<(1+\tilde{a})|H_1|.\]
Thus
\[|H_1^2-h_{111}^2|<3\tilde{a}H_1^2.\]
Substituting the above inequality into \eqref{4.11} we get
\[\frac{3r^\al\lambda\tilde{a}}{2H^2F\beta}H^2_1+\frac{C_4}{\lambda}-\frac{ar^\al\sum f^i\kappa_i^2}{u-a}\geq 0.\]
By \eqref{4.7'} we can choose $\eta=\eta(r, u, F, \beta)>0$ so small that
\[\frac{3r^\al\lambda\tilde{a}}{2H^2F\beta}H_1^2<\frac{ar^\al f^1\kappa_1^2}{2(u-a)}.\]
Hence, we get
\[C_{12}> (f^1)^{5/3}\kappa_1^2\]
which yields $\kappa_1\leq\tilde{C}_2.$
This completes the proof of Theorem \ref{4.1}.
\end{proof}

\bigskip
\section{Converging to a sphere}
\label{sec5}
Section 6 and 7 are small modifications of Section 4 and 5 of \cite{LSW18}, for completeness,
we will include them here.

It is sometimes more convenient to study the equation for the quantity
\[\rho(\xi, t)=\log r(\xi, t).\]
By a straightforward calculation we have
\[a_{ij}=e^{-\rho}\lt(1+|\bn\rho|^2\rt)^{-1/2}\tilde{a}_{ij},\]
where
\[\tilde{a}_{ij}=\gamma_{il}(\delta_{lm}+\bn_l\rho\bn_m\rho-\bn_{lm}\rho)\gamma_{mj},\]
and
\[\gamma_{ij}=\delta_{ij}-\frac{\bn_i\rho\bn_j\rho}{(1+|\bn\rho|^2)^{1/2}(1+(1+|\bn\rho|^2)^{1/2})}=e^\rho\lt(g^{-\frac{1}{2}}\rt)^{ij}.\]
Therefore, $\rho$ satisfies the following equation
\be\label{5.2}
\begin{aligned}
\rho_t=\frac{r_t}{r}&=-r^{\al-1}\sqrt{1+|\bn\rho|^2}\sigma_2^{1/2}(a_{ij})+1\\
&=-e^{\rho(\al-2)}\sigma_2^{1/2}(\tilde{a}_{ij})+1.\\
\end{aligned}
\ee

In the rest of this section, we shall finish the proof of Theorem \ref{th0.1}.
\begin{lemma}
\label{lem5.1}
For $\al\geq 2,$ there exists $C$ and $\gamma$ depending only on $n, \al,$ and $\mM_0,$
such that
\be\label{5.3}
\max\limits_{\s^n}\frac{|\bn r(\cdot, t)|}{r(\cdot, t)}\leq Ce^{-\gamma t,}\,\,\forall t>0.
\ee
\end{lemma}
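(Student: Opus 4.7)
The plan is to work in logarithmic radial coordinates $\rho = \log r$, which by \eqref{5.2} satisfies $\rho_t = 1 - \Psi$ with $\Psi = e^{(\al-2)\rho} F(\tilde a)$, and apply the parabolic maximum principle to $\varphi = \tfrac{1}{2}|\bn\rho|^2$. My goal is to establish the pointwise differential inequality $\varphi_t(\xi_0, t_0) \leq -2\gamma\,\varphi(\xi_0, t_0)$ at every interior spatial maximum $(\xi_0, t_0)$ of $\varphi$, for some $\gamma = \gamma(n, \al, \mM_0) > 0$. Together with the standard ODE comparison, this gives $\max_{\s^n}\varphi(\cdot, t) \leq C e^{-2\gamma t}$, which is precisely \eqref{5.3} after taking square roots.

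Direct differentiation yields
\[
\varphi_t = -(\al-2)\Psi\,|\bn\rho|^2 \;-\; e^{(\al-2)\rho}\,F^{ij}(\tilde a)\,\bn_k\rho\,\bn_k\tilde a_{ij},
\]
so for $\al > 2$ a favorable decay term is already visible, and the remaining task is to control the third-derivative piece. At the maximum I would choose a local orthonormal frame on $\s^n$ with $\bn\rho = p\,e_1$ and $\bn^2\rho$ diagonal. The first-order condition $\bn_k\rho\,\bn_{ki}\rho = 0$ forces $\bn_{1k}\rho = 0$ for all $k$, so in particular $\mu_1 := \bn_{11}\rho = 0$. A direct substitution then gives $\gamma_{11} = (1+p^2)^{-1/2}$, $\gamma_{jj} = 1$ for $j \geq 2$, and $\tilde a = \mathrm{diag}(1,\,1-\mu_2,\ldots,1-\mu_n)$; in particular $F^{ij}(\tilde a)$ is diagonal with $f^i$ uniformly positive by Theorem~\ref{th4.1} together with Lemmas~\ref{lem3.1}--\ref{lem3.2}. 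Because $\gamma$ depends on $\bn\rho$ only and $\bn_{1a}\rho = 0$ at $\xi_0$, we have $\bn_1\gamma = 0$ there, and the third-derivative term simplifies to $-p\sum_i f^i\gamma_{ii}^2\,\bn_{1ii}\rho$. The Ricci commutator on $\s^n$, namely $(\bn_1\bn_i - \bn_i\bn_1)\bn_i\rho = \delta_{1i}\bn_i\rho - \bn_1\rho$, combined with Hessian symmetry, gives $\bn_{1ii}\rho = \bn_{ii1}\rho - p$ for $i \geq 2$; and the second-order max condition $\bn_{ii}\varphi = p\,\bn_{ii1}\rho + \mu_i^2 \leq 0$ upgrades this to $p\,\bn_{1ii}\rho \leq -\mu_i^2 - p^2$ for $i \geq 2$, while $p\,\bn_{111}\rho \leq 0$ for $i=1$. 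Plugging everything back,
\[
\varphi_t(\xi_0, t_0) \;\leq\; -\bigl[(\al-2)\,\Psi + (n-1)\,c_0\,e^{(\al-2)\rho}\bigr]\,p^2 \;=\; -4\gamma\,\varphi,
\]
with $c_0 = \min_i f^i(\tilde a) > 0$, closing the argument.

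The main obstacle is the critical case $\al = 2$: the explicit killing term $(\al-2)\Psi p^2$ vanishes, so the entire decay rate must be extracted from the Ricci commutator on $\s^n$. The sign bookkeeping there is delicate — one has to combine first-order and second-order max conditions with Hessian symmetry in exactly the right way so that $p\,\bn_{1ii}\rho$ acquires a $-p^2$ term with the favorable sign, reflecting the positive Ricci curvature of $\s^n$. A companion point is confirming the uniform lower bound on $f^i$ at $\xi_0$: because $\tilde a_{11} = 1$ identically at such points, the estimate reduces to showing that $(1-\mu_2,\ldots,1-\mu_n)$ stays in a compact subset of the interior of $\Gamma_2$, which follows from the a priori bound $|A|\leq C$ of Theorem~\ref{th4.1} together with the lower bound $F\geq 1/C$ of Lemma~\ref{lem3.1}.
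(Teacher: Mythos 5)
Your proposal is correct and follows essentially the same route as the paper: the same auxiliary function $\tfrac12|\bn\rho|^2$ and evolution equation \eqref{5.2}, the first- and second-order conditions at the spatial maximum, the Ricci identity on $\s^n$ producing the decisive $-p^2$ term, and the uniform positivity of $F^{ij}$ (from Theorem \ref{th4.1} and Lemmas \ref{lem3.1}--\ref{lem3.2}) to get the exponential rate. Your frame-explicit computation with $\bn\rho=p\,e_1$ and $\tilde a=\mathrm{diag}(1,1-\mu_2,\dots,1-\mu_n)$ is just a pointwise diagonalized version of the paper's tensorial estimate $\max\kappa[A^{lm}]-\sum A^{kk}\le -C$ with $A^{lm}=F^{ij}\gamma_{il}\gamma_{mj}$.
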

\begin{proof}
Consider the auxiliary function
\[G=\frac{1}{2}|\bn\rho|^2,\,\,\text{where}\,\,\rho=\log r.\]
At the point where $G$ attains its spatial maximum, we have
\be\label{5.4}
0=\bn_iG=\sum\rho_l\rho_{li}
\ee
and
\be\label{5.5}
0\geq\bn_{ij}G=\rho_l\rho_{lij}+\sum\rho_{li}\rho_{lj}.
\ee
Moreover,
\be\label{5.6}
\begin{aligned}
G_t&=\sum\rho_l\rho_{lt}\\
&=\sum\rho_l\lt[-e^{\rho(\al-2)}(\al-2)\rho_l\sigma_2^{1/2}(\tilde{a}_{ij})-e^{\rho(\al-2)}F^{ij}\tilde{a}_{ijl}\rt]\\
&=-e^{\rho(\al-2)}\lt[(\al-2)\sigma_2^{1/2}(\tilde{a}_{ij})|\bn\rho|^2+F^{ij}\tilde{a}_{ijl}\rho_l\rt],\\
\end{aligned}
\ee
where $F^{ij}=\frac{\partial\sigma_2^{1/2}(\tilde{a}_{ij})}{\partial\tilde{a}_{ij}}.$
At the point under consideration, since $\sum_l\rho_l\rho_{li}=0,$ we have
\[\rho_r\bn_r\tilde{a}_{ij}=-\gamma_{il}\rho_r\bn_r\rho_{lm}\gamma_{mj}.\]
By the Ricci identity, we have
\[\bn_r\rho_{lm}=\bn_m\rho_{lr}+\delta_{lr}\rho_m-\delta_{lm}\rho_r.\]
Thus by \eqref{5.5},
\begin{align*}
\rho_r\bn_r\tilde{a}_{ij}&=-\gamma_{il}\rho_r(\bn_m\rho_{lr}+\delta_{lr}\rho_m-\delta_{lm}\rho_r)\gamma_{mj}\\
&\geq-\gamma_{il}(-\rho_{rl}\rho_{rm}+\rho_l\rho_m-\delta_{lm}|\bn\rho|^2)\gamma_{mj}.\\
\end{align*}
Substituting this into equation \eqref{5.6} we get
\be\label{5.7}
\begin{aligned}
G_t&\leq-e^{\rho(\al-2)}(\al-2)\sigma_2^{1/2}(\tilde{a}_{ij})|\bn\rho|^2\\
&+e^{\rho(\al-2)}F^{ij}\lt(-\gamma_{il}\rho_{rl}\rho_{rm}\gamma_{mj}+\gamma_{il}\rho_l\rho_m\gamma_{mj}-\sum_l\gamma_{il}\gamma_{lj}|\bn\rho|^2\rt)\\
&\leq e^{\rho(\al-2)}\lt(F^{ij}\gamma_{il}\rho_l\rho_m\gamma_{mj}-F^{ij}\sum_l\gamma_{il}\gamma_{lj}|\bn\rho|^2\rt).\\
\end{aligned}
\ee

Now let $A^{lm}=F^{ij}\gamma_{il}\gamma_{mj},$ by Theorem \ref{th4.1} we have
$\max\kappa[A^{lm}]-\sum A^{kk}\leq-C,$ thus
\[G_t\leq e^{\rho(\al-2)}\lt(A^{lm}\rho_l\rho_m-\sum^{kk}A^{kk}|\bn\rho|^2\rt)\leq-\gamma G,\]
for some positive constant $\gamma,$ this proves the Lemma.
\end{proof}

From \eqref{5.3} and Lemma \ref{lem1.1} we conclude that $|\bn r|\rightarrow 0$ exponentially as $t\rightarrow\infty.$ Hence by Theorem \ref{th4.1} and interpolation inequality we conclude that $r$ converges exponentially to a constant in the $C^\infty$ topology as $t\rightarrow\infty.$ This completes the proof of Theorem \ref{th0.1}.

\bigskip
\section{Counter examples}
\label{sec6}
In this section, we show that if $\al<2$ the flow
\be\label{6.1}
\lt\{
\begin{aligned}
\frac{\partial X}{\partial t}(x, t)&=-\sigma_2^{1/2}r^\al\nu,\\
X(\cdot, 0)&=X_0.
\end{aligned}
\rt.
\ee
may have unbounded ratio of radii, namely
\be\label{6.2}
R(X(\cdot, t))=\frac{\max_{\s^n}r(\cdot, t)}{\min_{\s^n}r(\cdot, t)}\goto\infty\,\,
\mbox{as $t\goto T$, for some $T>0$.}
\ee
Let $X(\cdot, t)$ be a convex solution to \eqref{6.1}, then its support function $u$
satisfies the equation
\be\label{6.3}
\lt\{
\begin{aligned}
\frac{\partial u}{\partial t}(x, t)&=-r^\al\left[\frac{S_{n-2}(\bn_{ij}u+u\delta_{ij})}{{n\choose 2}S_n(\bn_{ij}u+u\delta_{ij})}\right]^{1/2}\\
u(\cdot, 0)&=u_0.\\
\end{aligned}
\rt.
\ee
By a translation of time, we show below that there is a sub-solution $Y(\cdot, t)$
for $t\in(-1, 0)$ such that \eqref{6.2} holds as $t\goto 0.$ More precisely, we will construct a convex sub-solution
$Y(\cdot, t)$ such that its support function $\omega$ satisfies
\be
\lt\{
\begin{aligned}
\frac{\partial \omega}{\partial t}(x, t)&\geq-r^\al\left[\frac{S_{n-2}(\bn_{ij}\omega+\omega\delta_{ij})}{{n\choose 2}S_n(\bn_{ij}\omega+\omega\delta_{ij})}\right]^{1/2}\\
\omega(\cdot, 0)&= \omega_0.\\
\end{aligned}
\rt.
\ee
Moreover, we will show that $\min_{\mathbb{S}^n}\omega(\cdot, t)\goto 0$ while
$\max_{\mathbb{S}^n}\omega(\cdot, t)$ remains positive as $t\goto 0.$
\begin{lemma}
\label{lem6.1}
There is a sub-solution $Y(\cdot, t),$ where $t\in (-1, 0),$ to equation
\be\label{6.4}
\lt\{
\begin{aligned}
\frac{\partial u}{\partial t}(x, t)&=-ar^\al\left[\frac{S_{n-2}(\bn_{ij}u+u\delta_{ij})}{{n\choose 2}S_n(\bn_{ij}u+u\delta_{ij})}\right]^{1/2}\\
u(\cdot, 0)&=u_0\\
\end{aligned}
\rt.
\ee
for a sufficiently large constant $a>0,$ such that $\min_{\s^n}\omega(\cdot, t)\goto 0$
but $\max_{\s^n}\omega(\cdot, t)$ remains positive as $t\goto 0.$
\end{lemma}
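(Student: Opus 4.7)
The plan is to construct $Y(\cdot,t)$ as an explicit family of convex bodies whose support function $\omega$ degenerates to zero in one direction while staying bounded below in another, using the freedom to choose $a$ large in the sub-solution inequality $\omega_t \geq -ar^{\al}\sigma_2^{1/2}$. A natural first ansatz is the Minkowski sum
\[
Y_t = K + B_{\epsilon(t)}(0),\qquad \omega(x,t) = \omega_K(x) + \epsilon(t),
\]
where $K$ is a fixed smooth strictly convex body with $0 \in \partial K$ having outer normal direction $x^* \in \s^n$ at the origin (so $\omega_K(x^*) = 0$ and $\bn\omega_K(x^*) = 0$), and $\epsilon(t)>0$ decreases to $0$ as $t\to 0^-$. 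Then $\min_{\s^n}\omega = \epsilon(t)\to 0$ at $x^*$ while $\max_{\s^n}\omega = \max\omega_K + \epsilon \to \max\omega_K > 0$, so the geometric degeneration is built into the ansatz.

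Verifying the sub-solution inequality reduces to an ODE for $\epsilon$. At $x^*$ one computes $r = \sqrt{\epsilon^2 + |\bn\omega_K(x^*)|^2} = \epsilon$, and the principal radii of $Y_t$ at $x^*$ are $\rho_i^* + \epsilon$, where $\rho_i^* > 0$ are the principal radii of $K$ at the origin; hence $\sigma_2^{1/2}$ is bounded below by a positive constant $c^*=c^*(K)$, and saturating gives $\epsilon' = -ac^*\epsilon^{\al}$. Integrating, $\epsilon(t)\to 0$ in finite time precisely when $\al < 1$. For $\al\in[1,2)$ the construction must be refined so that $\sigma_2^{1/2}$ at the near-origin point grows like $1/\epsilon$ rather than being bounded — matching the behaviour of a sphere centered at the origin. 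A clean implementation is to replace the near-origin cap of $Y_t$ by, or glue it smoothly to, a shrinking sphere $B_{R(t)}(0)$, so that the sub-solution inequality at $x^*$ becomes the spherical ODE $R' = -aR^{\al-1}$, whose solutions $R(t) = ((2-\al)a(T-t))^{1/(2-\al)}$ extinguish in finite time iff $\al<2$; away from this cap both $r$ and $\sigma_2^{1/2}$ are bounded below, so the inequality amounts to $|\epsilon'|\leq aC$ and is satisfied once $a$ is chosen large.

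The main obstacles I anticipate are: (i) the precise choice of $K$, since the flatness-at-origin condition $\bn\omega_K(x^*)=0$ corresponds to a genuinely smooth, strictly convex body only when interpreted carefully (one must verify that the principal radii $\rho_i^*$ of $K$ at the origin are indeed positive and that $K$ is not forced to be degenerate there); (ii) verifying the sub-solution inequality in the transition zone between the spherical near-origin cap and the fixed part of $Y_t$, where both the gradient and the Weingarten matrix of $\omega$ must be controlled simultaneously; and (iii) choosing $a$ large enough that all pointwise inequalities close with the extinction time of the governing ODE equal to $t=0$. The case division by ranges of $\al$ makes the hypothesis $\al<2$ look sharp, which is consistent with Theorem \ref{th0.1}: for $\al\geq 2$ the flow smooths out to a round sphere and no such sub-solution can exist, and indeed the spherical benchmark $R^{2-\al}$ fails to reach $0$ in finite time in that regime.
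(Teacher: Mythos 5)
Your core mechanism is the right one and it does isolate why $\al<2$ is the relevant threshold: a piece of a shrinking sphere $\partial B_{R(t)}(0)$ centered at the origin satisfies the barrier inequality with equality when $R'=-aR^{\al-1}$, which extinguishes in finite time exactly when $\al<2$. This parallels the paper's mechanism (there the moving cap is a paraboloid at distance $|t|^\theta$ whose curvature is tuned so that $r^\al\sigma_2^{1/2}\gtrsim|t|^{\theta-1}$ matches the speed $\lesssim\theta|t|^{\theta-1}$). Your first ansatz (Minkowski sum $K+B_{\epsilon(t)}$) is, as you note, only good for $\al<1$, so it contributes nothing to the range actually needed.

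The genuine gap is the transition region between the shrinking cap and the fixed outer body, which you flag as ``obstacle (ii)'' but then dismiss with the claim that ``away from this cap both $r$ and $\sigma_2^{1/2}$ are bounded below, so the inequality amounts to $|\epsilon'|\leq aC$.'' That claim is false: any convex hypersurface interpolating between the origin-centered cap (which lies at distance $R(t)\to0$) and a time-independent outer piece (at distance $\geq\delta>0$) must contain moving points whose distance to the origin is comparable to $R(t)$, so $r$ is not bounded below there, and the sub-solution inequality at those points requires a quantitative lower bound on $\sigma_2^{1/2}$ calibrated against both $r^\al$ and the normal speed. Worse, the natural interpolants (the tangent cone to the shrinking cap, or the conical frustum arising from a convex hull with a fixed ball) have a vanishing principal curvature along the rulings, so for $n=2$ one gets $\sigma_2=0$ on a piece that still moves inward, and the inequality $\omega_t\geq-ar^\al\sigma_2^{1/2}$ fails outright there; for $n\geq3$ it is salvageable but only after checking the exponent balance, not by a crude ``bounded below'' argument. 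This verification is precisely the content of the paper's construction: the explicit rotationally symmetric profile \eqref{6.5}, with the middle piece $\frac{2}{1+\sigma}\rho^{1+\sigma}$ on $|t|^\theta\leq\rho\leq1$ and the exponent $\sigma=\frac{(2-\al)\theta-1}{\theta}>0$ chosen so that all principal curvatures are positive and $r^\al\sigma_2^{1/2}\geq C\rho^{\al+\sigma-1}=C\rho^{1-1/\theta}\geq C|t|^{\theta-1}$ dominates the speed throughout the moving region; only after that is the hypersurface closed up by a uniformly convex piece at fixed distance from the origin, where largeness of $a$ indeed suffices. (A further point to handle if you glue pieces non-smoothly: the support function of a convex hull is a maximum of the two support functions, and a maximum is the wrong operation for an outer barrier of this flow, so the ridge set needs either smoothing or a separate one-sided comparison argument.) As it stands, your proposal reproduces the easy part of the paper's proof and leaves out the part where the construction is actually delicate.
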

\begin{proof}
Let $\hat{M}_t$ be the graph of the function
\be\label{6.5}
\phi(\rho, t)=\lt\{
\begin{aligned}
&-|t|^\theta+|t|^{-\theta+\sigma\theta}\rho^2,\,\,\mbox{if $\rho<|t|^\theta$}\\
&-|t|^\theta-\frac{1-\sigma}{1+\sigma}|t|^{\theta(1+\sigma)}+\frac{2}{1+\sigma}\rho^{1+\sigma},\,\,
\mbox{if $|t|^\theta\leq\rho\leq 1$},\\
\end{aligned}
\rt.
\ee
where $x\in\R^n,$ $\rho=|x|,$ $\sigma=\frac{q\theta-1}{\theta,}$ $q=2-\al,$
and $\theta>\frac{1}{q}$ is a constant.

When $0\leq\rho\leq|t|^\theta,$ by a straight forward calculation we have
\[\phi_i=2|t|^{\theta(\sigma-1)}x_i,\]
\[\phi_{ij}=2|t|^{\theta(\sigma-1)}\delta_{ij},\]
and
\[h_{ij}=\frac{2|t|^{\theta(\sigma-1)}\delta_{ij}}{\sqrt{1+4|t|^{2\theta(\sigma-1)}\rho^2}}.\]
The principal curvatures of the graph $\phi$ are
\[\kappa_1=\kappa_2=\cdots=\kappa_{n-1}=\frac{2|t|^{\theta(\sigma-1)}}{\sqrt{1+4|t|^{2\theta(\sigma-1)}\rho^2}}
\,\,\mbox{and $\kappa_n=\frac{2|t|^{\theta(\sigma-1)}}{\lt(\sqrt{1+4|t|^{2\theta(\sigma-1)}\rho^2}\rt)^3}$}.\]
Therefore,
\[r^\al\sigma_2^{1/2}\geq \frac{C|t|^{\al\theta}\cdot|t|^{\theta(\sigma-1)}}{1+4|t|^{2\theta(\sigma-1)}\rho^2}\geq C|t|^{\theta-1}\]
and $\lt|\frac{\partial}{\partial t}Y(p, t)\rt|\leq2\theta|t|^{\theta-1},$ where $p=(x, \phi(|x|, t))$
is a point on the graph of $\phi.$

When $|t|^\theta\leq\rho\leq 1,$ we obtain
\[\phi_i=2\rho^{\sigma-1}x_i,\]
\[\phi_{ij}=2(\sigma-1)\rho^{\sigma-3}x_ix_j+2\rho^{\sigma-1}\delta_{ij},\]
and
\[\kappa_1=\kappa_2=\cdots=\kappa_{n-1}=\frac{2\rho^{\sigma-1}}{\sqrt{1+4\rho^{2\sigma}}}\,\,
\mbox{and $\kappa_n=\frac{2\sigma\rho^{\sigma-1}}{\lt(\sqrt{1+4\rho^{2\sigma}}\rt)^3}$}.\]
Therefore, we have
\[r^\al\sigma_2^{1/2}>C\rho^{\al+\sigma-1}=C\rho^{1-\frac{1}{\theta}}\geq C|t|^{\theta-1}\]
and $\lt|\frac{\partial}{\partial t}Y(p, t)\rt|\leq2\theta|t|^{\theta-1}.$
Hence, the graph of $\phi$ is a sub-solution to \eqref{6.4}, provided $a$ is sufficiently large.

Next, we extend the graph of $\phi$ to a closed convex hypersurface $\hat{\mM}_t,$
such that it is $C^{1,1}$ smooth, uniformly convex, rotationally symmetric, and depends smoothly on $t.$
Moreover, we may assume that the ball $B_1(z)$ is contained in the interior of
$\hat{M}_t,$ for all $t\in(-1, 0),$ where $z=(0,\cdots, 0, 10)$ is a point on the
$x_{n+1}$- axis. Then $\hat{\mM}_t$ is a subsolution to \eqref{6.4} for sufficiently large $a$.
\end{proof}

We are in a position to prove Theorem \ref{th0.2}. For a given $\tau\in(-1, 0),$ let $\mM_{-1}$ be a smooth, closed, uniformly convex hypersurface
inside $\hat{\mM}_{\tau}$ and enclosing $B_1(z).$ Let $\mM_t$ be the solution to the flow \eqref{6.4} with initial data $\mM_{-1}.$
By Lemma \ref{lem6.1} and the classic comparison principle we have $\mM_t$ touches the origin at $t=t_0,$ for some $t_0\in(\tau, 0).$ We choose
$\tau$ to be very close to $0$ so that $|t_0|$ is sufficiently small.

On the other hand, let $\tilde{X}(\cdot, t)$ be the solution to
\[\frac{\partial\tilde{X}}{\partial t}=-ba\tilde{r}^\al\sigma_2^{1/2}\nu\]
with initial condition $\tilde{X}(\cdot, \tau)=\partial B_1(z),$ where $b=2^\al\sup\{|p|^\al: p\in \mM_t, \tau<t<t_0\}$
and $\tilde{r}=|\tilde{X}-z|$ is the distance from $z$ to $\tilde{X}.$ We can choose $\tau$ so close to $0$ such that
$B_{1/2}(z)$ is contained in $\tilde{X}(\cdot, t)$ for all $t\in(\tau, t_0).$ By the comparison principle, we see that the ball
$B_{1/2}(z)$ is contained in the interior of $\mM_t,$ for all $t\in(\tau, t_0).$ Therefore, as $t\rightarrow t_0$,
we have $\min r(\cdot, t)\rightarrow 0$ and $r(\cdot, t)>|z|=10.$ This proves \eqref{6.2} for $\mM_t.$

So far, we have proved Theorem \ref{th0.2} when $r^\al\sigma_2^{1/2}$ is replaced by $ar^\al\sigma_2^{1/2}$ for a large constant $a>0.$
Making the rescaling $\tilde{\mM}_t=a^{-\frac{1}{2-\al}}\mM_t,$ one easily verifies that $\tilde{\mM}_t$ solves the flow equation \eqref{6.1} and Theorem
\ref{th0.2} is proved.

\bigskip

\end{document}